\documentclass[11pt]{article}

\usepackage[margin=1.5in]{geometry}

\usepackage{authblk, amsfonts, amsmath, amssymb, amsthm, bbm, xcolor, mathrsfs, mathtools}
\usepackage{graphicx} %For inserting images
\usepackage{graphicx, tikz}%For figures
\usetikzlibrary{decorations.markings}
\usetikzlibrary{arrows.meta}
\usepackage{enumitem} %For changing item separation
\usepackage{hyperref} %so that you can click on theorem numbers
\usepackage[textwidth=1.5in]{todonotes} %For comment formatting
\usepackage{thm-restate}
\usepackage{cleveref}
\usepackage{comment}

\newtheorem{thm}{Theorem}[section]

\newtheorem{lemma}[thm]{Lemma}
\newtheorem{observation}[thm]{Observation}

\theoremstyle{definition}
 
\newenvironment{claimth}[1]{\par\noindent\underline{Claim:}\space#1}{}
\newenvironment{claimproof}[1]{\par\noindent\underline{Proof:}\space#1}{\hfill $\blacksquare$}

\newcommand{\C}{\mathcal{C}}
\DeclareMathOperator{\val}{val}
\newcommand{\rank}{\mathrm{rank}}

\definecolor{DeepCarrotOrange}{rgb}{0.91, 0.41, 0.17}
\definecolor{BananaYellow}{rgb}{1.0, 0.88, 0.21}

\definecolor{CornflowerBlue}{rgb}{0.39, 0.58, 0.93}
\definecolor{Magenta}{rgb}{0.50, 0.0, 0.50}
\definecolor{AppleGreen}{rgb}{0.55, 0.71, 0.0}
\definecolor{AO}{rgb}{0.0, 0.5, 0.0}

\setlist[enumerate]{itemsep=0em}
\tikzset{MyNode/.style={circle, draw, inner sep=2,outer sep=0, fill=gray}}
\tikzset{MyRedArc/.style={line width=2.3pt, red}}
\tikzset{MyBlueArc/.style={line width=2.3pt, blue}}
\tikzset{
  arrHalfway/.style={
    postaction={
      very thick,
      decorate,
      decoration={
        markings,
        mark= at position .56 with {\arrow{>}},
}}}}
\tikzset{
  arrHalfwayLoop/.style={
    postaction={
      very thick,
      decorate,
      decoration={
        markings,
        mark= at position .5 with {\arrow{<}},
}}}}

\title{Full immersions in Eulerian group-labeled graphs}
\author{Ioannis Eleftheriadis}
\affil{Magdalen College, University of Oxford}
\author{Rose McCarty\thanks{Supported by the National Science Foundation under Grant No.~DMS-2452111.}}
\affil{Schools of Mathematics and Computer Science, Georgia Institute of Technology, USA.}
\author{Caleb McFarland\thanks{Supported in part by the Georgia Tech ARC-ACO Fellowship and in part by the National Science Foundation under Grant No.~DMS-2452111.}}
\affil{School of Mathematics, Georgia Institute of Technology, USA.}
\author{Evangelos Protopapas\thanks{Supported by the ERC project BUKA (n° 101126229).}}
\affil{Faculty of Mathematics, Informatics and Mechanics, University of Warsaw, Poland.}
\author{Paul Wollan}
\affil{Sapienza Universit\`{a} di Roma, Italy.}

\date{October 2026}

\begin{document}

\maketitle

\begin{abstract}
An immersion of a graph $H$ in a graph $G$ is \textit{full} if the immersion uses every edge of $G$.  We give a structure theorem for group-labeled Eulerian graphs excluding a fixed group-labeled graph as a full immersion.  
\end{abstract}

\section{Introduction}
In this paper, we extend the well-developed theory of structure theorems for graphs excluding a fixed immersion to what we call \textit{full immersions} in group labeled graphs.  A primary goal in doing so is to develop tools for analyzing vertex minors in graphs which are obtained from circle graphs by adding a fixed number of vertices arbitrarily attached to the rest of the circle graph.  

All graphs in this paper are finite and possibly have both loops and parallel edges.  A graph $G$ contains a graph $H$ as an \textit{immersion} if there exists a function $\phi$ whose domain is $V(H) \cup E(H)$ such that:
\begin{enumerate}
    \item $\phi(v) \in V(G)$ and $\phi(v) \neq \phi(u)$ for all $u,v \in V(H), u \neq v$;
    \item for all edges $e \in E(H)$ with distinct ends $u$ and $v$, $\phi(e)$ is a walk with endpoints $\phi(u)$ and $\phi(v)$; for all loops $e\in E(H)$ attaching to the vertex $v$, $\phi(e)$ is a non-empty closed walk which starts and ends at $\phi(v)$; 
    \item For every edge $e \in E(H)$, the walk $\phi(e)$ uses each edge at most once; for all distinct edges $e, f \in E(H)$, the walks $\phi(e)$ and $\phi(f)$ are pairwise edge disjoint.
\end{enumerate}
The vertices $\{\phi(v): v \in V(H)\}$ are the \textit{branch vertices} of the immersion, and the walks $\phi(e)$ for $e \in E(H)$ are denoted the \textit{branch walks}.  In the literature, a distinction is made between \textit{weak} and \textit{strong} immersions, where strong immersions have the additional requirement that no branch vertex be contained as an internal vertex of a branch walk.  However, for our purposes here, we make no such distinction and use immersion to refer to what has historically been called weak immersion.  Immersions were introduced by Nash-Williams in \cite{NashWilliams1964} where he conjectured that graphs are well quasi-ordered under immersion containment.  As part of their graph minors series, Robertson and Seymour affirmed the conjecture \cite{ROBERTSON2010181} and general structure theorems for graphs excluding a fixed immersion were independently proven in~\cite{Devos2013note,WOLLAN2015}.  

We present a structure theorem for excluded \textit{full} immersions in \textit{group labeled graphs}.  An immersion of a graph $H$ in a graph $G$ is \textit{full} if every edge of $G$ is contained in the union of the edge sets of the branch walks.    

Given a group $\Gamma$, a \textit{$\Gamma$-labeled graph} is an undirected graph where each edge $e$ is assigned both an orientation and a label $\alpha_e$ in $\Gamma$.  Informally, when considering immersions in group-labeled graphs, the sequence of edges in a branch walk has a weight in $\Gamma$ obtained by the product of the weights on the edges of the walk, where for each edge e on the walk, we right-multiply by $\alpha_e$ when traversing an edge $e$ according to the orientation, and right-multiply by $\alpha_e^{-1}$ when traversing the edge contrary to the orientation. Some care is needed to rigorously deal with the orientations and we give the exact definition below.    
Group-labeled graphs have been used in a number of contexts: graph embeddings and coverings \cite{GROSS1974239, GROSS1977273}, symmetry-forced rigidity \cite{Bernstein2022190, Kaszanitzky2021176, Tanigawa15} (i.e., bar and joint frameworks where every non-rigid motion breaks a specified symmetry), as well as structural \cite{GGW14, GerardsGraphicMatroids, SeymourGraphicMatroids} and extremal \cite{NW22, PSWX24} properties of matroids. 

A primary motivation for studying full immersions in Eulerian group-labeled graphs comes from the structure theory of excluded vertex minors. A \textit{circle graph} is the intersection graph of a family of chords of a circle. Circle graphs play a central role in the area, including Bouchet's characterization by excluded vertex minors \cite{Bouchet1994} and the vertex-minor grid theorem \cite{VertexGrid}.  Equivalently, for a circle graph $G$ there exists a 4-regular multigraph $\bar{G}$ with $V(G) = V(\bar{G})$ for which a fixed Eulerian tour encodes the adjacencies in $G$: two vertices are adjacent in $G$ exactly when their occurrences alternate along the tour \cite{Bouchet1972,ReadRosenstiehl1978}. In this representation, local complementation at a vertex $v$ corresponds to changing the Euler tour by rerouting at the vertex $v$, and vertex-minor containment in $G$ is reflected by full immersion in the corresponding $4$-regular graphs. This correspondence extends naturally to graphs obtained from circle graphs by adding vertices with arbitrary neighborhoods. Consider a graph $G^+$ obtained by adding a vertex $x$ to $G$ with an arbitrary neighborhood in $G$.  The neighborhood of $x$ can be encoded by a $\mathbb{Z}_2$-labeling of the edges of $\bar{G}$, with adjacency determined by the parity of the sum of the labels along the portion of the Euler tour between the two occurrences of a vertex. Labeled full immersions in the labeled $\bar{G}$ then allow us to study vertex-minors within $G^+$. For graphs obtained by adding $k$ vertices to the circle graph $G$, we can again encode their neighborhoods coordinatewise by labels in $\mathbb{Z}_2^k$ in $\bar{G}$. Thus, full immersions in group-labeled graphs provide a framework for studying vertex minors in graphs which are nearly circle graphs.

%PRELIMINARIES
%\section{Preliminaries}
%\label{sec:prelim}

In general, we will follow standard graph theoretic notation.  
Given a graph $G$ and a set of vertices $B$, we write $\delta_G(B)$ (or simply $\delta(B)$ when $G$ is clear from the context) for the set of all edges of $G$ with exactly one end in $B$.  
If $B$ consists of a single vertex $v$, we write $\delta(v)$.  We will use $E(B)$ to refer to the set $E(G[B])$.  For a set $X$ and $x \in X$, we will use the notation $X - x$ for $X \setminus \{x\}$.  

\subsection{Group-labeled Graphs}

We now introduce the notation to work with orientations of edges in group labeled graphs.
Given a graph $G$, for each edge $e$ of $G$ we define two distinct \textit{oriented edges} which we denote by $\vec{e}$ and $\vec{e}^{-1}$.  
We call $e$ the \textit{underlying} edge of an oriented edge $\vec{e}$ and will refer to $\vec{e}$ as an \textit{orientation} of the edge $e$.
An oriented edge $\vec{e}$ has a \emph{tail} and a \emph{head}, each of which is an endpoint of the underlying edge $e$.  If $e$ has two distinct endpoints, then the head and tail are distinct vertices.  Moreover, it holds for all oriented edges $\vec{e}$ that the head of $\vec{e}$ is equal to the tail of $\vec{e}^{-1}$.  We say that $\vec{e}$ is oriented \emph{from} its tail \emph{to} its head.  We write $\vec{E}(G)$ for the set of all oriented edges arising from the edges of $G$.  Depending on the context, we will want to consider the degree of a vertex either taking into account possible loops or not.  To that end, we adopt the following convention.  The \textit{degree} of a vertex $v$, denoted $\deg(v)$, is $|\delta(v)|$ plus twice the number of loops incident to $v$.  We will use $d(v)$ as notation to indicate $|\delta(v)|$.  Note that $\deg(v)$ and $d(v)$ always have the same parity, regardless of the presence of loops.

%RMCOMMENT{I was less formal about orientations because we need loops to have two different orientations in order to define the labeling.}

%PWCOMMENT{I've expanded the formalism slightly so that it still requires loops to have two distinct orientations.}

%RMCOMMENT{Changed below so that $\gamma'$ is a shifting of $\gamma$ rather than $(G, \gamma')$ is a shifting of $(G, \gamma)$.}

Let $\Gamma$ be a group.
We denote the group operation multiplicatively and the identity by 1.
A \emph{$\Gamma$-labeling} of a graph $G$ is a function $\gamma\colon \vec{E}(G) \rightarrow \Gamma$ such that $\gamma(\vec{e}^{-1}) = \gamma(\vec{e})^{-1}$ for any edge $\vec{e}$ of $G$.
We call $\gamma(\vec{e})$ the \textit{label} of an oriented edge $\vec{e}$.  Throughout the article, we adopt the convention that $\gamma(\emptyset) = 1$ for any labeling function $\gamma$.  
A \textit{$\Gamma$-labeled graph} is a pair $(G, \gamma)$ where $G$ is a graph and $\gamma$ is a $\Gamma$-labeling of $G$.
There is an equivalence relation on $\Gamma$-labelings defined as follows.
Given a $\Gamma$-labeled graph $(G,\gamma)$, a vertex $x \in V(G)$, and a group element $\alpha \in \Gamma$, \textit{shifting by $\alpha$ at $x$} means to append $\alpha$ at the beginning of the label of each oriented edge with tail $x$ and to append $\alpha^{-1}$ at the end of the label of each oriented edge with head $x$.
Thus, if $\vec{e}$ is an oriented loop at $x$, then its new label after shifting is $\alpha\gamma(\vec{e})\alpha^{-1}$.
Likewise, if $\vec{e}$ is an oriented edge whose tail is $x$ and whose head is not $x$, then its new label is $\alpha\gamma(\vec{e})$.
This is the inverse of $\gamma(\vec{e}^{-1})\alpha^{-1}$, which is the new label of $\vec{e}^{-1}$.
We say that a $\Gamma$-labeling $\gamma'$ of $G$ is a \emph{shifting} of $\gamma$ if it can be obtained by a (finite) sequence of such shiftings.

%RMCOMMENT{I put $+\alpha$ in front and $-\alpha$ in back so that it is conjugation by $\alpha$ on loops.}

A \textit{trail} $T$ in a graph $G$ is a sequence of oriented edges $\vec{e_1}\vec{e_2}\dots\vec{e_n}$, $n\ge1$, such that the underlying edges are all distinct, and for each $i \in [n-1]$, the head of $\vec{e_i}$ equals the tail of $\vec{e}_{i+1}$. Note that the endpoints of the edges need not be distinct; the same vertex may be visited multiple times. The  \textit{tail} of $T$ is the tail of $\vec{e}_1$ and the \textit{head} of $T$ is the head of $\vec{e_n}$. We call the head and tail of $T$ its \textit{ends}. We use $V(T)$ to refer to the set of vertices which are either heads or tails of oriented edges in $T$.  We write $E(T)$ to refer to the undirected edges of $G$ which are the underlying edges of oriented edges contained in $T$.  We say two trails $T_1$ and $T_2$ are \textit{edge disjoint} if $E(T_1) \cap E(T_2) = \emptyset$.  Given a group-labeling $\gamma$ of $G$, the \emph{label} of $T$ is $\gamma(T) = \gamma(\vec{e_1})\gamma(\vec{e_2})\ldots\gamma(\vec{e_n})$. The \emph{inverse} of $T$ is the trail $T^{-1}$ which is the sequence of edges in the reverse order and with the reverse orientation. Thus $\gamma(T^{-1})=\gamma(T)^{-1}$. The \textit{transitions} of $T$ are the ordered pairs $(\vec{e_i}, \vec{e_{i+1}})$ for $i \in [n-1]$. Thus $T$ has $n-1$ transitions. We view transitions of the form $(\vec{e_1}, \vec{e_{2}})$ and $(\vec{e_2}^{-1}, \vec{e_{1}}^{-1})$ as being equivalent; thus $T^{-1}$ has the same transitions as $T$.

%RMCOMMENT{I changed the definition of transitions because I found this clearer, but also because it was strange that if $e_1$ is a loop then $\vec{e_1}\vec{e_2}$ and $\vec{e_1}^{-1}\vec{e_2}$ have the same transition.}

%RMCOMMENT{I guess shifting needs to be added to the definitions below; we really don't want ``admitting an immersion'' to not be invariant under shifting.}

\subsection{Immersions}
\label{subsec:Immersions}

We now extend the definition of immersions to group-labeled graphs.  

Let $\Gamma$ be a group. We say that a $\Gamma$-labeled graph $(G, \gamma_G)$ admits an \textit{immersion} of a $\Gamma$-labeled graph $(H, \gamma_H)$ if there exist a shifting $\gamma_G'$ of $\gamma_G$ and a function $\varphi$ which injectively maps $V(H)$ to $V(G)$ and which maps $\vec{E}(H)$ to trails in $G$ such that\begin{enumerate}
    \item for any oriented edge $\vec e \in \vec{E}(H)$, the trails $\varphi(\vec{e})$ and $\varphi(\vec{e}^{-1})$ are inverses,
    \item for any distinct edges $e_1$ and $e_2$ of $H$, the trails $\varphi(\vec{e_1})$ and $\varphi(\vec{e_2})$ are edge-disjoint, and
    \item for any oriented edge $\vec{e}$ of $H$, the tail of $\vec{e}$ is mapped to the tail of $\varphi(\vec{e})$, the head of $\vec{e}$ is mapped to the head of $\varphi(\vec{e})$, and $\gamma_H(\vec{e}) = \gamma_G'(\varphi(\vec{e}))$.
\end{enumerate} We say that $\varphi$ is an \emph{immersion} of $(H, \gamma_H)$ \emph{into} $(G, \gamma_G)$. As in the case of undirected graphs, we call the image of $V(H)$ under $\varphi$ the \textit{branch vertices} and the image of $\vec{E}(H)$ under $\varphi$ the \textit{branch trails}. The \textit{transitions} of an immersion are the transitions of its branch trails. 
The labeling $\gamma_G'$ is an \textit{accommodating labeling} of the immersion; in general, an accommodating labeling will not necessarily be unique.  

A graph $G$ contains a graph $H$ as a \emph{near-full immersion} if there is an $H$-immersion $\phi$ such that there is at most one non-trivial component $C$ after deleting edges of $\bigcup_{e \in E(H)} E(\phi(e))$ in $G$, and moreover, the component $C$, if it exists, contains a branch vertex of $H$. For a vertex $x \in V(H)$, if the component $C$, when it exists, contains the vertex $\phi(x)$, we say $G$ contains $H$ as an \emph{$x$-near-full immersion}.  The definition extends analogously to group labeled immersions.

\begin{observation}\label{obs:transitivity of almost total immersions}
    If $(G_1, \gamma_1)$ has an $x$-near-full immersion $\varphi_2$ of $(G_2, \gamma_2)$, and $(G_2, \gamma_2)$ has a $y$-near-full immersion $\varphi_3$ of $(G_3, \gamma_3)$ with $\varphi_3(y) = x$, then $(G_1, \gamma_1)$ has a $y$-near-full immersion of $(G_3, \gamma_3)$.
\end{observation}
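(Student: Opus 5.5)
The plan is to compose the two immersions in the natural way and then check the three axioms, the labels, and the near-fullness condition separately. Define $\psi$ on $V(G_3)$ by $\psi(v)=\varphi_2(\varphi_3(v))$; this is injective as a composition of injective maps, and $\psi(y)=\varphi_2(x)$. For an oriented edge $\vec e$ of $G_3$, write $\varphi_3(\vec e)=\vec f_1\cdots\vec f_m$, a trail in $G_2$, and set $\psi(\vec e)$ to be the concatenation $\varphi_2(\vec f_1)\varphi_2(\vec f_2)\cdots\varphi_2(\vec f_m)$.

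\emph{Immersion axioms.} Consecutive pieces match up, since the head of $\varphi_2(\vec f_i)$ is $\varphi_2$ of the head of $\vec f_i$, which is the tail of $\vec f_{i+1}$. The underlying edges of $\psi(\vec e)$ are distinct: the $f_i$ are distinct edges of $G_2$, and $\varphi_2$ sends distinct edges to edge-disjoint trails. So $\psi(\vec e)$ is a trail, and by the same reasoning trails for distinct edges of $G_3$ are edge-disjoint. Inverses are preserved because $\varphi_3(\vec e^{-1})$ is the reversed trail and $\varphi_2(\vec f^{-1})=\varphi_2(\vec f)^{-1}$. The ends are correct because tails and heads compose.

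\emph{Labels.} This is the step needing care, because shiftings enter at both levels. Let $\gamma_1'$ be the accommodating shifting of $\gamma_1$ for $\varphi_2$, and $\gamma_2'$ the one of $\gamma_2$ for $\varphi_3$. Suppose $\gamma_2'$ comes from shifting by $\alpha_j$ at vertices $u_j$ of $G_2$. Then apply to $\gamma_1'$ the corresponding shifts by $\alpha_j$ at $\varphi_2(u_j)$, in the same order, to get $\gamma_1''$.

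The key lemma is that for every oriented edge $\vec f$ of $G_2$ we have $\gamma_1''(\varphi_2(\vec f))=\gamma_2'(\vec f)$. It suffices to check one shift at a time, by induction on the number of shifts. Shifting by $\alpha$ at $w$ changes the label of a trail from $t$ to $h$ as follows: interior passes through $w$ contribute $\alpha^{-1}\alpha$, which cancels. Only the ends are affected, giving $[t=w]\alpha\cdot(\text{label})\cdot[h=w]\alpha^{-1}$. Since $\varphi_2$ is injective on vertices, the ends of $\varphi_2(\vec f)$ equal $\varphi_2(u)$ exactly when the corresponding ends of $\vec f$ equal $u$. So the trail label transforms exactly as the edge label of $\vec f$ does; the same computation covers loops. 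Then multiplicativity along concatenations gives $\gamma_1''(\psi(\vec e))=\gamma_2'(\varphi_3(\vec e))=\gamma_3(\vec e)$. Hence $\gamma_1''$, itself a shifting of $\gamma_1$, is accommodating.

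\emph{Near-fullness.} Let $U_2=\bigcup_{f\in E(G_2)}E(\varphi_2(f))$. Then $G_1-U_2$ has at most one nontrivial component $C_1$, and if it exists it contains $\varphi_2(x)$. Similarly, $G_2$ minus the edges used by $\varphi_3$ has at most one nontrivial component $C_2$, containing $x$. The edges of $G_1$ not used by $\psi$ are $E(G_1)\setminus U_2$ together with $\bigcup_{f\in E(C_2)}E(\varphi_2(f))$.
\begin{itemize}
\item Each trail $\varphi_2(f)$ with $f\in E(C_2)$ is connected and joins the images of the ends of $f$. So the union of these trails is connected whenever $C_2$ is, and it contains $\varphi_2(x)$ if $C_2$ is nonempty.
\item $C_1$ also contains $\varphi_2(x)$.
\end{itemize}
Hence all unused edges lie in a single component, which contains $\varphi_2(x)=\psi(y)$. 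Combined with the axiom and label checks, this makes $\psi$ a $y$-near-full immersion. The main obstacle is the bookkeeping in the label lemma; the rest is routine.
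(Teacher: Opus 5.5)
Your proof is correct. The paper states this as an observation without proof, and your argument is exactly the routine composition it leaves implicit: concatenate the $\varphi_2$-images of the trails of $\varphi_3$, and transport the shifts of $\gamma_2$ to the vertices $\varphi_2(u_j)$ (this works because $\varphi_2$ is injective on vertices and interior visits cancel). The two leftover components then merge at $\varphi_2(x)=\psi(y)$, using the intended reading that a non-trivial component is one containing an edge, so that every unused edge of $\varphi_3$ lies in $C_2$.
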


Our primary tool in proving the structure theorem for excluded full immersions in Eulerian group labeled graphs is the following theorem which says that excluding a fixed near-full immersion implies excluding some larger group labeled immersion with no further requirements on the edge set which it utilizes.  

\begin{restatable}{thm}{mainReduction}
\label{thm:mainReduction}
    Let $\Gamma$ be a finite group. For every $\Gamma$-labeled graph $(H, \gamma_H)$ there exists a $\Gamma$-labeled graph $(H', \gamma_{H'})$ such that for every Eulerian $4$-edge-connected $\Gamma$-labeled graph $(G, \gamma_G)$ and for every vertex $x \in V(H)$, if $(G, \gamma_G)$ contains $(H', \gamma_{H'})$ as an immersion, then $(G, \gamma_G)$ contains $(H,\gamma_H)$ as an $x$-near-full immersion.
\end{restatable}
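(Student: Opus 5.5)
We will take $H'$ to be a large group-labeled graph that is highly redundant for $H$, and absorb the leftover edges of $G$ into the branch trail system. Concretely, let $N$ be large compared to $|V(H)|$, $|E(H)|$ and $|\Gamma|$. Let $(H',\gamma_{H'})$ be obtained from $H$ as follows. Each vertex $v$ of $H$ is replaced by a vertex $v'$. Each edge $e$ of $H$ is kept with its label. Additionally, at every vertex $v'$ we attach $N$ loops carrying every label of $\Gamma$, and between every pair $u',v'$ we add $N$ parallel edges with every label (in both orientations).

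Now suppose $\varphi'$ is an immersion of $H'$ into $(G,\gamma_G)$ with accommodating labeling $\gamma'_G$, and fix $x\in V(H)$. Restricting $\varphi'$ to the copy of $H$ gives an ordinary immersion $\varphi$ of $(H,\gamma_H)$. The remaining branch trails of $H'$ form a reservoir of edge-disjoint closed trails at each branch vertex, with prescribed labels. They also give trails between any two branch vertices, with any desired label.

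The plan is then to modify $\varphi$ so that every edge of $G$ outside $\varphi(x)$'s component of the unused edges is absorbed into branch trails. Let $R=E(G)\setminus\bigcup_e E(\varphi(e))$. Because $G$ is Eulerian and every branch trail of $\varphi$ has the parity behavior of an edge, the graph $(V(G),R)$ has all degrees even, except possibly at branch vertices. In fact, after adding back the reservoir trails it is even everywhere. Hence $R$ decomposes into closed trails.

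Each nontrivial component $C$ of $(V(G),R)$ not containing $\varphi(x)$ must be spliced into some branch trail. To do this we use that $C$, being a subgraph of the 4-edge-connected graph $G$, meets some branch trail of $\varphi$ or of a reservoir trail at a vertex $w$. If $C$ meets a branch trail $\varphi(\vec e)$ at $w$, we insert an Euler tour of $C$ (based at $w$) into $\varphi(\vec e)$. This changes the label of $\varphi(\vec e)$ by a conjugate of $\gamma'_G$ of that tour. If $C$ only meets reservoir trails, we first splice it into a reservoir trail, which becomes a closed trail of uncontrolled label. All reservoir trails we do not need are then simply merged into the component at $\varphi(x)$, so they may remain unused.

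The key step, and the main obstacle, is correcting the labels after splicing. Each splice multiplies a branch label by some element of $\Gamma$ (a conjugate of the closed-walk label). We must compensate by also splicing, at the same point, a reservoir closed trail of the inverse label, or a pair of parallel reservoir trails $u'v'$ routed back and forth. The finiteness of $\Gamma$ and the pigeonhole principle over $N\gg|\Gamma|$ reservoir trails of each label guarantee such a correction is available.

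Care is needed because the splice point $w$ need not be a branch vertex. To handle this, we route the insertion through a branch vertex: we reroute $\varphi(\vec e)$ at the nearest branch vertex on it and use a reservoir trail to return, which makes the label change an explicit product we can cancel. Alternatively, if this becomes messy, we would prove an intermediate lemma: in a 4-edge-connected Eulerian graph containing a rich $H'$-immersion, every leftover component can be attached to the $x$-component via disjoint reservoir trails. All of $R$ outside $C_x$ would then simply be added to the component at $\varphi(x)$, using Observation~\ref{obs:transitivity of almost total immersions} to compose steps.

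We expect the bookkeeping of labels under shifting, with conjugation appearing at loops and at intermediate splice vertices, to be the technical heart of the argument.
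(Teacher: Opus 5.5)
\textbf{There is a genuine gap at exactly the step you flag as the main obstacle: the label correction.} The mechanism you propose for it does not work.

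Reservoir closed trails are based at branch vertices, so they cannot be spliced ``at the same point'' $w$. What you can do is this: if splicing changes the label of $\varphi(\vec e)$ from $g=\gamma_H(\vec e)$ to $g'$, append at its head a reservoir loop of label $g'^{-1}g$. But that loop then leaves the unused set. Every component of $R$ whose only connection to $\varphi(x)$ ran through it becomes newly stranded, and so does every unused reservoir loop at some $\varphi(v)$ in the same position. Each such component touches only branch trails of $H$, so it must be absorbed into one of them. That perturbs a label again and demands another correction. The number of components of $R$ is not bounded in terms of $|V(H)|$, $|E(H)|$, $|\Gamma|$, so pigeonhole over $N$ reservoir trails per label does not make this regress terminate.

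More fundamentally, your pigeonhole is over the \emph{original} reservoir labels. What is actually available for correction is a reservoir trail together with the material it is then forced to carry. Nothing in your argument prevents all of these effective labels from lying in a proper subgroup $\Gamma'$; over $\mathbb{Z}_2$, every label-$1$ reservoir loop might drag a label-$1$ component with it. Then no correction outside $\Gamma'$ is available. Your fallback ``intermediate lemma'' essentially restates the theorem. A smaller error: $R$ already contains the reservoir trails, and its degree at $\varphi(v)$ has the parity of $\deg_H(v)$. So $R$ need not decompose into closed trails when $H$ is not Eulerian.

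\textbf{What is missing is a global absorption statement, and this is where $4$-edge-connectivity is needed.} Your argument only ever uses that $G$ is connected. The paper's Theorem~\ref{thm:EP} shows that if a $4$-edge-connected Eulerian graph has $k$ edge-disjoint $x$-circuits labeled outside $\Gamma'$, then all of $E(G)$ decomposes into at least $\lceil k/2\rceil$ such circuits. This fails for $2$-edge-connected graphs. Over $\mathbb{Z}_2$, take digons $xy_i$ with labels $0,1$, each carrying a pendant digon $y_iz_i$ with labels $0,1$. In any decomposition, the circuit using the pendant digon at $y_i$ also uses both $xy_i$ edges, so every circuit has label $0$.

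The paper's route is as follows.
\begin{enumerate}
\item It takes $H'$ to be a large rich flower, so that $H$ immerses in a smaller rich flower with every leftover edge at the center.
\item It obtains a trivially labeled rich flower as an $x$-near-full immersion. It does this by splicing leftovers into a full flower immersion with uncontrolled labels and then applying Lemma~\ref{lem:ZeroFlower}.
\item It climbs a chain of subgroups. At each stage Theorem~\ref{thm:EP} gives an $(x,\Gamma')$-decomposition of the whole edge set. Lemma~\ref{lem:uncrossing} then merges it with the current near-full generating flower at the cost of boundedly many edges, producing loops labeled outside $\Gamma'$.
\item Lemma~\ref{lem:GenFlowerUniversal} finishes.
\end{enumerate}
Labels are thus regained only through decompositions that cover every edge, never by local compensation.
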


\subsection{The structure theorem}
\label{subsec:structureThm}

The structure theorem for Eulerian group-labeled graphs excluding a fixed graph as a full immersion builds on the structure of graphs excluding a group-labeled immersion.  We give the notation necessary to state the structure theorem for excluded group-labeled immersions (without the requirement that they be nearly full), and then present the statement for excluded full group-labeled immersions.

A \textit{tree-cut decomposition} of a graph $G$ is a pair $(T, \mathscr{B})$ such that $T$ is a tree and $\mathscr{B} = (B_t : t \in V(T))$ is a collection of pairwise-disjoint (possibly empty) subsets of $V(G)$ whose union is $V(G)$.  The sets in $\mathscr{B}$ are the \textit{bags} of the decomposition. 

Roughly, the structure theorem for excluded group-labeled immersions says that each bag either 1) has a ``torso'' with few high degree vertices, or 2) has ``almost all of its edges'' labeled over a proper subgroup. 

So, let $G$ be a graph, and let $(T, \mathscr{B})$ be a tree-cut decomposition of $G$. Given a bag $B_t \in \mathscr{B}$, the \textit{torso} of $B_t$ is the graph $H$ obtained from $G$ as follows. Let $T_1, \dots, T_\ell$ be the components of $T \setminus t$. For each $i \in [\ell]$, let $V_i = \bigcup_{s \in V(T_i)} B_{s}$. Then $H$ is created by identifying each non-empty set $V_i$ to a single vertex, which we call a \emph{new vertex} of the torso. We keep parallel edges, but we remove any loops at the new vertices. 

Now, let $\Gamma$ be a group, let $(G, \gamma)$ be a $\Gamma$-labeled graph, and let $B\subseteq V(G)$. Then a \textit{certificate for $B$} is a tuple $(X,\gamma')$ so that $X$ is a set of edges of $G$ with $\delta(B) \subseteq X \subseteq \delta(B) \cup E(B)$ and $\gamma'$ is a shifting of $\gamma$ so that the $\gamma'$-labels of orientations of the edges in $E(B)\setminus X$ generate a proper subgroup of~$\Gamma$. The \emph{value} of a certificate is
\begin{align*}
    \val(X,\gamma') &= |\delta(B)| + 2|X \cap E(B)|.
\end{align*}This counts the edges of $X$, except edges with both ends in $B$ are counted twice. 

\begin{restatable}[\cite{McCartyMW2025Structure}]{thm}{structtheorem1}
\label{thm:MainStructureThm}
    For any finite group $\Gamma$ and positive integers $k$ and $n$, there is an integer $t=4kn|\Gamma|^{6 + \lfloor\log_2|\Gamma|\rfloor}$ so that the following holds for any 2-edge-connected $\Gamma$-labeled graph $(G, \gamma)$ which forbids an immersion of some $n$-vertex $\Gamma$-labeled graph where each vertex is incident to at most $k$ edges. 
    
    There exist a shifting $\gamma'$ of $\gamma$ and a tree-cut decomposition $(T, \mathscr{B})$ of $G$ such that for every bag $B \in \mathscr{B}$, at least one of the following holds.
    \begin{enumerate}
        \item There are at most $n|\Gamma|$ vertices in the torso of $B$ of degree greater than $t$, and none of them are the new vertices of the torso.
        \item There exists $X \subseteq E(G)$ so that $(X,\gamma')$ is a certificate for $B$ of value at most~$t$.
    \end{enumerate}
\end{restatable}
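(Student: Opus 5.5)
This theorem is cited from \cite{McCartyMW2025Structure}, so here it is used as a black box. Still, here is how I would prove it from scratch, following the standard route for immersion structure theorems (as in \cite{WOLLAN2015}), adapted to group labels.

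\textbf{Step 1 (local structure around a high-connectivity set).} Work in the shifted-labeling world and fix a tangle-like object: a set $S$ of $t$ vertices that is well-linked in the edge sense. Such a set is not separable by fewer than $t$ edges into two parts each holding many vertices of $S$. Using Menger's theorem for edge cuts, I would show that if many vertices of degree greater than $t$ are pairwise $t$-edge-connected, say more than $n|\Gamma|$ of them, then we can route edge-disjoint trails among $n$ of them. Pigeonholing over labels, we would then either realize every prescribed label in $\Gamma$ on those trails, or find that all available rerouting options yield labels in a proper subgroup.

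\textbf{Step 2 (group dichotomy).} This is the key lemma. Take a highly edge-connected cluster $B$, and suppose that after every shifting, the labels inside $B$ generate $\Gamma$ even after deleting any $t$ edges. I would show that edge-disjoint trails can then be rerouted through $B$ to achieve any target label. The idea is to find, via an Eulerian or flow argument, many edge-disjoint cycles through a root, whose labels generate $\Gamma$ after normalizing the shifting by a spanning tree. Concatenating suitable cycles into a branch trail adjusts its label arbitrarily. Every element is a product of at most $|\Gamma|$ generators, and the subgroup chain has length at most $\log_2|\Gamma|$. This accounts for the factor $|\Gamma|^{O(1)+\log_2|\Gamma|}$ in $t$, since each time we fail we pass to a proper subgroup and pay a multiplicative $|\Gamma|$ in the number of edges deleted. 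If the rerouting fails, the deleted edges form the set $X$ of a certificate with value at most $t$.

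\textbf{Step 3 (global decomposition).} Build $(T,\mathscr{B})$ greedily by repeatedly splitting along edge cuts of size less than $t$ that separate the high-degree vertices. The 2-edge-connectivity hypothesis keeps the torsos well behaved, and submodularity of $|\delta(\cdot)|$ lets us uncross cuts into a laminar family, hence a tree. For a bag that cannot be split, either outcome (1) holds (few high-degree vertices, none new, because new vertices have degree at most $t$ by the cut size), or Step 2 applies. In the latter case we either get outcome (2) or build the forbidden $n$-vertex immersion with maximum degree $k$. We would use $kn$ trails, each needing a label correction, which is where the factor $kn$ in $t$ comes from.

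\textbf{Main obstacle.} The hardest part is Step 2. Label corrections on different trails must be made edge-disjointly and must stay consistent under a single global shifting $\gamma'$ shared by all bags. I would first try to fix $\gamma'$ bag by bag, using the tree to propagate shiftings through the small cuts, and then argue that the local shiftings are compatible because the bags are disjoint.
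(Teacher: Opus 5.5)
\textbf{Verdict: genuine gap.} Treating the theorem as a black box matches the paper, which gives no proof and imports Theorem~\ref{thm:MainStructureThm} from \cite{McCartyMW2025Structure}. Your from-scratch sketch, however, does not go through.

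The decisive gap is in Step~3. You build $(T,\mathscr{B})$ from edge-connectivity alone and then run a per-bag dichotomy. But the value of a certificate charges $|\delta(B)|$, so outcome~2 is only available to bags with at most $t$ boundary edges, and a connectivity-only decomposition can produce bags for which neither outcome holds. For example, let $\Gamma\neq\{1\}$ and take a clique $Q_0$ on $|\Gamma|+1$ vertices with $t+1$ parallel edges between each pair. Attach $N>t$ further such cliques to it, each by two edges, with every label equal to $1$. This $2$-edge-connected graph forbids a single vertex carrying a loop labelled $\alpha\neq 1$, so $n=k=1$. Your splitting rule cuts off every pendant clique. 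That leaves the bag $Q_0$ with more than $n|\Gamma|$ vertices of degree greater than $t$ and with $|\delta(Q_0)|=2N>t$. No immersion can be built, yet the certificate of value at most $t$ promised by Step~2 does not exist for this bag. The theorem survives here only because the whole graph can be a single bag with $X=\emptyset$. So the decomposition must depend on the labels: in the certificate case the bag has to absorb everything not separated from the branch center by the small hitting set (for instance, the component of $G-Y$ containing it, whose boundary lies in $Y$), and the rest must be decomposed around it. By contrast, what you flag as the main obstacle is not one. A shifting amounts to choosing $s(v)\in\Gamma$ at each vertex. The certificate condition for $B$ depends only on $s$ restricted to $B$, and the bags partition $V(G)$, so local shiftings combine for free.

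Step~1 is essentially fine: Menger's theorem with an auxiliary sink gives a flower from pairwise highly edge-connected vertices. Step~2, however, asserts the central step rather than proving it, and it runs the subgroup chain backwards. The chain should ascend. You start from an identity-labelled flower (Lemma~\ref{lem:ZeroFlower}) and enlarge the subgroup $\Gamma'$ the flower generates, and the certificate appears at the first step where enlargement fails. There is no reason to descend, since any proper subgroup already yields outcome~2. Each enlargement needs a packing-versus-covering statement: either many edge-disjoint circuits through the branch center labelled outside $\Gamma'$, or a small edge set $Y$ meeting all of them. An ``Eulerian or flow argument'' controls connectivity, not labels, so it does not provide this. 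Nor is ``many cycles whose labels generate $\Gamma$'' enough. You need many circuits sharing a common label $\alpha\notin\Gamma'$, made edge-disjoint from the branch trails of the flower already built. That disentangling step (compare Lemma~\ref{lem:uncrossing} and the proof of Theorem~\ref{thm:flowerReduction}, which follow this strategy) halves the petals and divides the multiplicity by about $4|\Gamma|$ per step. This is how bounds like $n|\Gamma|$ and $|\Gamma|^{O(1)+\log_2|\Gamma|}$ arise, and your accounting for the constants does not correspond to an actual argument.
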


We are now able to state the structure theorem for excluded \textit{full} group labeled immersions in Eulerian graphs.  The theorem finds the same structure as
    Theorem \ref{thm:MainStructureThm} unless the total edge labels of $G$ and $H$ differ
    in an elementary abelian $2$-group quotient of $\Gamma$.

\begin{restatable}{thm}{fullStructure}
\label{thm:fullStructure}
    Let $\Gamma$ be a finite group. For every Eulerian
    $\Gamma$-labeled graph $(H,\gamma_H)$ with at least one edge,
    there exists a positive integer $t = t(\Gamma, H)$ such that the following holds.
    Let $(G,\gamma_G)$ be a $4$-edge-connected Eulerian
    $\Gamma$-labeled graph with no $(H,\gamma_H)$ full immersion.
    Then one of the following holds.
    \begin{enumerate}
        \item There exists a normal subgroup $\Gamma'$ of $\Gamma$
        such that $\Gamma/\Gamma'$ is isomorphic to $\mathbb{Z}_2^r$
        for some nonnegative integer $r$, and over this quotient
        labeling the sum of the edge labels in $(G,\gamma_G)$
        differs from that in $(H,\gamma_H)$.

        \item There exist a shifting $\tilde{\gamma}$ of $\gamma_G$
        and a tree-cut decomposition $(T,\mathcal{B})$ of $G$
        such that for every bag $B \in \mathcal{B}$, at least one
        of the following holds.
        \begin{enumerate}
            \item There are at most $t$ vertices in the torso of $B$
            of degree greater than $t$, and none of them are
            the new vertices of the torso.

            \item There exists $X \subseteq E(G)$ such that
            $(X,\tilde{\gamma})$ is a certificate for $B$
            of value at most $t$.
        \end{enumerate}
    \end{enumerate}
\end{restatable}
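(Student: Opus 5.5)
Assuming outcome 1 fails, we show that outcome 2 holds. The plan is to apply Theorem~\ref{thm:mainReduction} to a suitably enlarged graph $H^+$, and then feed the resulting $H'$ into Theorem~\ref{thm:MainStructureThm}.

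\textbf{Step 1: the obstruction.} Let $\Gamma_0$ be the subgroup generated by all commutators and all squares. Then $\Gamma_0$ is normal, $\Gamma/\Gamma_0\cong\mathbb{Z}_2^r$, and every $\mathbb{Z}_2^s$ quotient of $\Gamma$ factors through $\Gamma/\Gamma_0$. So outcome 1 fails exactly when the images of the edge labels sum to the same element $\sigma\in\Gamma/\Gamma_0$ for $G$ and for $H$.

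This quantity is well defined. In an abelian quotient of exponent $2$ we have $\alpha=\alpha^{-1}$, so orientations are irrelevant. Shifting at $x$ multiplies the sum by $\alpha^{\deg(x)}$, which is trivial because $\deg(x)$ is even in an Eulerian graph.

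\textbf{Step 2: the gadget $H^+$.} Fix $x\in V(H)$. Let $H^+$ be obtained from $H$ by attaching at $x$ a bouquet of loops, one loop labeled $g$ for every $g\in\Gamma$, with each element repeated a large constant number $N=N(\Gamma)$ of times. The intended use is as follows.

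Suppose $(G,\gamma_G)$ contains $(H^+,\gamma_{H^+})$ as an $x$-near-full immersion. Let $C$ be the leftover component; it contains $\varphi(x)$. Since $G$ and the image of $H^+$ are both Eulerian, every vertex of $C$ has even degree, so $C$ is Eulerian. Take an Euler tour $W$ of $C$ starting at $\varphi(x)$. Insert $W$ into one of the branch trails of the loops at $\varphi(x)$; the modified trail is still a closed trail at $\varphi(x)$. After this, every edge of $G$ lies in a branch trail.

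It remains to absorb the loops of the bouquet into $H$ while correcting the label. We consume the loops by concatenating them with each other or splicing them into the image of one edge of $H$ incident to $x$. We choose the loop labels so that the extra product equals the correction $\gamma_G'(W)^{-1}$, up to the reordering freedom we have.

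The difficulty is that the product of the loop labels is order-dependent. We can only guarantee the correct value modulo $\Gamma_0$, and that value is forced: it is determined by comparing total sums, which are equal because outcome 1 fails. Using many copies of each element, any element of $\Gamma_0$ can be written as a product of commutators and squares of available loop labels. The remaining loops can be paired as $g,g^{-1}$ and cancel.

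If $H$ has no edge at $x$, we pick $x$ as an endpoint of some edge. This is possible because $H$ has at least one edge.

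\textbf{Step 3: conclusion.} Let $H'$ be the graph given by Theorem~\ref{thm:mainReduction} applied to $H^+$. If $G$ contained $H'$ as an immersion, then $G$ would contain $H^+$ as an $x$-near-full immersion, and by Step 2 it would contain $H$ as a full immersion. This contradicts the hypothesis, so $G$ excludes $H'$. Now apply Theorem~\ref{thm:MainStructureThm} with $n=|V(H')|$ and $k$ the maximum degree of $H'$ (note $G$ is $2$-edge-connected). This gives outcome 2 with $t=\max(t_{\ref{thm:MainStructureThm}}, n|\Gamma|)$.

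\textbf{Main obstacle.} The hard part is Step 2's bookkeeping. We must show that concatenating the fixed multiset of loop labels in a suitable order, together with the label of $W$ and the ordering constraints imposed by the trail structure, realizes every element of the prescribed $\Gamma_0$-coset. The first thing to try is the identity $g h g^{-1} h^{-1}$, which is available from the four loops labeled $g,h,g^{-1},h^{-1}$, together with squares $g\cdot g$. The loops used in such words are then combined with the leftover loops, which cancel in inverse pairs.
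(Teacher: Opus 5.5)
Your proposal is correct and is essentially the paper's proof. Both attach a bouquet of loops of every label at $x$ and apply Theorem~\ref{thm:mainReduction} to the enlarged graph. Both then merge the Euler tour of the leftover component with all loop trails into one identity-labeled closed trail at $\varphi(x)$, which is possible because failure of outcome~1 forces the correction into $\Gamma_0=\Gamma^2[\Gamma,\Gamma]$, splice it into a branch trail at $x$, and finish with Theorem~\ref{thm:MainStructureThm}.

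The one thing to pin down is the choice of $N$: take it even, e.g.\ $N$ even with $N\ge 4|\Gamma|$. Evenness makes the bouquet's total trivial modulo $\Gamma_0$, and it lets the unused loops in each class $\{g,g^{-1}\}$ pair off, including when $g$ is an involution. Your worry about ordering constraints is vacuous, since every piece is a closed trail at $\varphi(x)$. The paper also streamlines the bookkeeping by observing that squares alone generate $\Gamma_0$, so each correction factor uses two loops of a single label.
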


The proof of Theorem \ref{thm:fullStructure} is presented in Section \ref{sec:ExtendingNear}.

\subsection{Outline}

The paper is organized as follows.  In Section \ref{sec:packing}, we present Theorem \ref{thm:EP} showing that if a 4-edge-connected $\Gamma$-labeled Eulerian graph has many edge disjoint circuits sharing a common vertex $x$ each with label not in a fixed subgroup $\Gamma'$ of $\Gamma$, then the edge set of the whole graph can be partitioned into edge disjoint circuits through $x$ each with label outside $\Gamma'$.  This packing result is used in Section \ref{sec:NearFull} to prove Theorem \ref{thm:mainReduction} which states that excluding a fixed group-labeled $(H, \gamma_H)$ as a near-full immersion in a 4-edge-connected Eulerian graph $(G, \gamma_G)$ implies that $(G, \gamma_G)$ does not contain a larger $(H', \gamma_{H'})$ as a group labeled immersion without the requirement that it be nearly full.  Already, Theorem \ref{thm:mainReduction} together with Theorem \ref{thm:MainStructureThm} yields a structure theorem for excluded near-full immersions in 4-edge-connected group-labeled Eulerian graphs.  Finally, in Section \ref{sec:ExtendingNear}, we determine when a near-full immersion can be
extended to a full immersion, providing the final ingredient needed
to prove Theorem~\ref{thm:fullStructure}.

%%%%%%%%%%%%%%%%%%%
%PACKING NONZERO CIRCUITS
%%%%%%%%%%%%%%%%%%%%

\section{Decomposing to circuits labeled outside of a subgroup}
\label{sec:packing}

Let $\Gamma$ be a group, let $\Gamma'$ be a subgroup of $\Gamma$, let $(G, \gamma)$ be a $\Gamma$-labeled graph, and let $x$ be a vertex of $G$.

A \emph{circuit} in $G$ is a trail whose head and tail are equal; we say that it \emph{begins} and \emph{ends} at that vertex.
Given a vertex $x$ of $G$, an \emph{$x$-packing} of $G$ is a collection $\mathcal{C}$ of pairwise edge-disjoint circuits so that each circuit begins and ends at $x$.  We say that the $x$-packing \textit{saturates $x$} if $|\C| = \deg (x)/2$.  
We call an $x$-packing an \emph{$(x, \Gamma')$-packing} if every circuit has label outside of $\Gamma'$.
Similarly, an \emph{$(x, \Gamma')$-decomposition} is an $(x, \Gamma')$-packing $\mathcal{C}$ so that every edge of $G$ is in $E(T)$ for some $T \in \mathcal{C}$.

This section is dedicated to proving the following theorem.

\begin{restatable}{thm}{decomptocircuits}
\label{thm:EP}
Let $k \ge 1$ be a positive integer.  Let $\Gamma$ be a group, let $\Gamma'$ be a subgroup of $\Gamma$, let $(G, \gamma)$ be a $\Gamma$-labeled Eulerian graph which is $4$-edge-connected, and let $x$ be a vertex of $G$ such that there exists an $(x, \Gamma')$-packing of size $k$.
Then there exists an $(x, \Gamma')$-decomposition of size at least $\lceil k/2\rceil$.
\end{restatable}

We proceed to introduce the set-up needed to establish \Cref{thm:EP} above.  
Fix an Eulerian, $4$-edge-connected $\Gamma$-labeled graph $(G,\gamma)$, a vertex $x \in V(G)$, and an $x$-packing $\C$ which saturates $x$.
We refer to each nontrivial connected component of $G - \bigcup_{C \in \C} E(C)$ as an \textit{unused component} of $\C$.  Note each unused component is an Eulerian subgraph of $G$.  
We write $\mathsf{comp}_G(\C)$ for the auxiliary bipartite graph whose vertices in one part are the circuits in $\C$, in the other the unused components of $\C$, and vertices in distinct parts are adjacent if and only if the corresponding parts share a vertex.
Whenever $G$ is clear from the context we drop the subscript.
Given a vertex $X$ of $\mathsf{comp}(\C)$ we write $\deg_\C(X)$ for the degree of $X$ in $\mathsf{comp}(\C)$; whenever we speak of the degree of a circuit or an unused component we implicitly mean their degree in $\mathsf{comp}(\C)$.

Let $H$ be an unused component of $\C$ of degree one, and $C \in \C$ the unique circuit which contains a vertex of $H$.
We define the \emph{closure of $H$}, denoted by $\mathrm{cl}(H)$, to be a containment minimal set $S \subseteq V(G) - x$ so that $V(H) \subseteq S$ and $\delta(S)$ contains at most two edges in $C$.
Observe that this notion is well-defined given $\C$ saturates $x$ and thus $V(G) -x$ contains $V(H)$ and $\delta(V(G) - x)$ intersects $C$ in exactly two edges.  
%\PW{I've assumed that the closure is only defined for packings which saturate x.  This solves two issues: one, in the case where the packing has size 1, an unused component H containing x can't be separated from x by an edge cut, and two, when C returns multiple times to x, it may not be the case that we can find an edge cut containing H which intersects C in only two edges.}
%\EP{This is not always the case. It is sufficient to say that $\C$ contains $\deg(x)/2$ many circuits. Maybe we should assume this here or figure out another way to deal with this.} %Since we use this definition frequently, we omit the phrase ``of $H$ into $C$'' whenever $H$ and $C$ are clear from context. We first establish that this notion is well-defined. 
Moreover, if $S,S' \subseteq V(G)- x$ both contain $V(H)$ and satisfy $|\delta(S) \cap E(C) | = |\delta(S') \cap E(C)| = 2$ , then so does $S \cap S'$.
To see this, notice that $S \cap S'$ must contain $V(H)$ and not $x$, and if $E(C)$ has an edge in $\delta(S \cap S')$, then it has at least two edges in $\delta(S \cup S')$ because it is an $x$-circuit.  By submodularity of the cut-function, the number of edges of $C$ in $\delta(S \cap S')$ is at most two, as required.  It follows that $\mathrm{cl}(H)$ is uniquely defined.  Finally, we note that $\mathrm{cl}(H) \subseteq V(H) \cup V(C)$ as deleting any vertex not contained in either $H$ or $C$ does not change how many boundary edges intersect $E(C)$.  

We call an unused component $H$ of $\C$ of degree one \emph{flexible} if the unique circuit $C \in \C$ incident to $H$ in $\mathsf{comp}(\C)$ is also of degree one in the auxiliary graph.
Symmetrically, we call $H$ \emph{inflexible} if $C$ has degree at least two.
An important consequence of the former definition is that $\mathrm{cl}(H)$ always intersects some other circuit in the packing whenever $H$ is flexible.
We formalize this in the lemma below.

\begin{lemma}
\label{lem:rankwelldfn}
Let $(G, \gamma)$ be a $4$-edge-connected Eulerian group-labeled graph, $x \in V(G)$ a vertex of $G$, $\C$ an $x$-packing which saturates $x$, and $H$ a flexible unused component of $\C$ that is adjacent to $C \in \C$ in $\mathsf{comp}(\C)$.
Then there is some $C'\in \C$ different from $C$ such that $\mathrm{cl}(H)\cap V(C')\neq \emptyset$.
\end{lemma}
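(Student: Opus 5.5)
The plan is to argue by contradiction, using the cut $\delta(\mathrm{cl}(H))$ against $4$-edge-connectivity. Let $S=\mathrm{cl}(H)$ and suppose that $S\cap V(C')=\emptyset$ for every $C'\in\C-C$. Then $S$ is nonempty, since it contains $V(H)$ and $H$ is a nontrivial component. Also $x\notin S$ by the definition of the closure. So $\delta(S)$ is an edge cut of $G$, and by $4$-edge-connectivity it has at least four edges. By the definition of the closure, at most two of these edges lie in $E(C)$. The goal is to show that every edge of $\delta(S)$ lies in $E(C)$, which gives $|\delta(S)|\le 2$, a contradiction.

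The key steps, in order:

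\begin{enumerate}
\item \emph{Locate the vertices of $S$.} As observed after the definition of the closure, $S\subseteq V(H)\cup V(C)$.

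\item \emph{Classify the edges of $\delta(S)$.} Every edge of $G$ lies either in some circuit of $\C$ or in some unused component. Fix $e\in\delta(S)$.
  \begin{enumerate}
  \item If $e\in E(C')$ for some $C'\neq C$, then an endpoint of $e$ lies in $S\cap V(C')$, contradicting our assumption.
  \item If $e$ lies in an unused component $H'$, then an endpoint of $e$ lies in $S\cap V(H')$. By Step 1, that endpoint lies in $V(H)$ or in $V(C)$.
    \begin{enumerate}
    \item If it lies in $V(H)$, then $H'=H$, because distinct unused components are vertex-disjoint.
    \item If it lies in $V(C)$, then $H'$ is adjacent to $C$ in $\mathsf{comp}(\C)$. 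Flexibility of $H$ means $C$ has degree one in $\mathsf{comp}(\C)$, so again $H'=H$.
    \end{enumerate}
    In either case $e\in E(H)$, so both ends of $e$ lie in $V(H)\subseteq S$. This contradicts $e\in\delta(S)$.
  \end{enumerate}
  Hence $\delta(S)\subseteq E(C)$.

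\item \emph{Conclude.} Combining Step 2 with the closure condition gives $|\delta(S)|=|\delta(S)\cap E(C)|\le 2<4$, contradicting $4$-edge-connectivity. So $\mathrm{cl}(H)$ meets some $C'\in\C$ with $C'\neq C$.
\end{enumerate}

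The only delicate point is the case analysis in Step 2. There one must use that unused components are vertex-disjoint, as distinct components of the same subgraph. One must also use flexibility, so that no unused component other than $H$ touches $C$. The other inputs, namely the containment $S\subseteq V(H)\cup V(C)$ and the bound of two $C$-edges in $\delta(S)$, are already established in the text before the lemma.
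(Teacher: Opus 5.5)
Your proof is correct and is essentially the paper's argument. The paper uses $4$-edge-connectivity to get a trail from $x$ whose last edge lies in $\delta(\mathrm{cl}(H))\setminus E(C)$, and then runs your case analysis on that one edge: it is not in $C$, not in $H$, and not in another unused component (by flexibility and $\mathrm{cl}(H)\subseteq V(H)\cup V(C)$), so it lies in some other circuit $C'$; your direct cut count, showing that the contrary assumption forces $\delta(\mathrm{cl}(H))\subseteq E(C)$, is the contrapositive of this and avoids the unnecessary trail.
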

\begin{proof}
Consider the cut $\delta(\mathrm{cl}(H))$. Since this contains at most two edges in $C$ and $G$ is $4$-edge-connected, there exists a trail $T$ in $G$ from $x$ to $\mathrm{cl}(H)$, internally disjoint from $\{x\}\cup\mathrm{cl}(H)$, that is edge-disjoint from $E(C) \cap \delta(\mathrm{cl}(H))$. Consider the last oriented edge $\vec e$ of $T$ with underlying edge $e$ and head $v$ contained in $\mathrm{cl}(H)$. The edge $e$ cannot lie in $E(C)$ by definition, nor in $H$ because $V(H) \subseteq \mathrm{cl}(H)$.  Evidently, the edge $e$ cannot lie in a different unused component of $\C$, since in that case $v$ must either lie in $V(C)$ implying that the degree of $C$ in $\mathsf{comp}(\C)$ is at least two, contradicting our assumptions, or lie in $V(H)$ contrary to $H$ being a component.
%\CM{currently it could be in a different unused component, because we don't know that one end of $\vec{e}$ is in $H$. You first have to assume that $C$ has degree 1, in which case it follows.}\EP{Indeed. Hopefully it's fixed now.}\PW{Agree that it is fine now.  Removing comments}
We conclude that there exists some other circuit $C' \in \C$ such that $e \in E(C')$ and thus $V(C')$ has a vertex in $\mathrm{cl}(H)$. 
\end{proof}

Dually to the closure, we consider the notion of an \emph{ear decomposition of $H$ into $C$}. This is a sequence $T_0:=H, T_1, \ldots, T_k$ so that
\begin{enumerate}
\item for each $i \in \{1,\ldots, k\}$, $T_i$ is a non-empty subtrail of $C-x$ that is edge-disjoint from $\bigcup_{j=0}^{i-1}E(T_j)$ with both ends in $\bigcup_{j=0}^{i-1}V(T_j)$; and
\item for each $i \in \{1,\dots, k\}$, the sequence $T_{1}, \ldots, T_{i}$ can be reordered into a subtrail of $C-x$.
\end{enumerate}
Note that we allow each $T_i$ to be a circuit. We call $T_1, \ldots, T_k$ the \emph{ears} of this decomposition.
We say that the ear decomposition is \emph{maximal} if there is no trail $T_{k+1}$ so that $H, T_1, \ldots, T_k, T_{k+1}$ is also an ear decomposition of $H$ into $C.$
The lemma below establishes that the notion of a maximal ear decomposition is dual to the closure.

\begin{lemma}
\label{lem:maximalEarDecomp}
Let $(G, \gamma)$ be a $4$-edge-connected Eulerian group-labeled graph, $x \in V(G)$ a vertex of $G$, $\C$ an $x$-packing saturating $x$, and $H$ an unused component of $\C$ of degree one adjacent to $C \in \C$ in $\mathsf{comp}(\C)$.
If $T_0:=H, T_1, \ldots, T_k$ is a maximal ear decomposition of $H$ into $C$, then $\bigcup_{j=0}^k V(T_j) = \mathrm{cl}(H)$.
\end{lemma}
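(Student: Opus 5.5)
Write $U := \bigcup_{j=0}^k V(T_j)$. The plan is to prove the two inclusions $U \subseteq \mathrm{cl}(H)$ and $\mathrm{cl}(H) \subseteq U$ separately. For the first we use the minimality/uniqueness of the closure. For the second we show that $U$ is itself a set of the type over which $\mathrm{cl}(H)$ is the minimum, and then invoke minimality.

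\emph{Step 1: $U \subseteq \mathrm{cl}(H)$.} We argue by induction on $i$ that $V(T_0)\cup\cdots\cup V(T_i) \subseteq \mathrm{cl}(H)$. The base case holds since $V(H)\subseteq \mathrm{cl}(H)$. For the inductive step, suppose the claim holds for $i-1$ but the ear $T_i$ leaves $\mathrm{cl}(H)$. Both ends of $T_i$ lie in $\mathrm{cl}(H)$, and $T_i$ is a subtrail of $C - x$, so $T_i$ uses at least two edges of $\delta(\mathrm{cl}(H))\cap E(C)$. Hence these two edges are exactly $\delta(\mathrm{cl}(H))\cap E(C)$.

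Now consider the whole circuit $C$, which starts and ends at $x\notin \mathrm{cl}(H)$. Since $C$ meets $\mathrm{cl}(H)$ (it contains the vertex shared with $H$), $C$ must enter and leave $\mathrm{cl}(H)$ along a part of $C$ from $x$. This uses at least two further crossing edges, unless those crossings coincide with the edges of $T_i$. They cannot coincide: condition (2) says $T_1,\dots,T_i$ reorder into a subtrail of $C-x$, so they avoid $x$. The segment of $C$ outside $T_i$ that contains $x$ must therefore cross $\delta(\mathrm{cl}(H))$ twice more, unless it never enters $\mathrm{cl}(H)$. Ruling out that last possibility is the delicate point. I would handle it by noting that $C$ visits $V(H)$, and by using parity of crossings of the closed walk $C$ to derive at least four crossing edges, which is a contradiction.

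\emph{Step 2: $\mathrm{cl}(H)\subseteq U$.} It suffices to show that $|\delta(U)\cap E(C)|\le 2$ and $x\notin U$; the latter holds because every ear avoids $x$. Minimality then gives $\mathrm{cl}(H)\subseteq U$, using the intersection property established before Lemma~\ref{lem:rankwelldfn}. By condition (2), $E(T_1)\cup\dots\cup E(T_k)$ forms a subtrail $P$ of $C-x$. Decompose $C$ into $P$ together with the complementary pieces. Suppose $\delta(U)$ contained at least four edges of $C$. Walking along $C$ from $x$, we would find a maximal subtrail $Q$ of $C-x$ that lies outside $E(P)$, leaves $U$, and returns to $U$ with both ends in $U$. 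Choosing $Q$ adjacent to $P$ along $C$ (or containing a vertex of $H$ if $P$ is empty), the sequence $T_1,\dots,T_k,Q$ still reorders into a subtrail of $C-x$. Then $H,T_1,\ldots,T_k,Q$ is an ear decomposition, which contradicts maximality.

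The main obstacle is this last construction. The returning segment must be contiguous with the existing ears, so that condition (2) is preserved, and it must avoid $x$. I would choose $Q$ as the shortest extension of the subtrail $P$ along $C$, in either direction, that ends at a vertex of $U$ before reaching $x$. I would then show that if neither direction yields such a $Q$, the only crossings of $C$ with $\delta(U)$ are the two edges through which $C$ travels from $x$ into the extremes of $P\cup H$. This uses that $C$ touches $H$ and that $H$ has degree one in $\mathsf{comp}(\C)$.
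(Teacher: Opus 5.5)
Your proposal is correct and follows essentially the paper's proof. The paper also uses an induction showing each new ear stays in $\mathrm{cl}(H)$, since otherwise $\delta(\mathrm{cl}(H))$ would contain four edges of $C$. It then shows that maximality forces the occurrences of $U$ along $C$ to form exactly the interval of the concatenated ears, which gives $|\delta(U)\cap E(C)|=2$, and minimality of the closure yields $\mathrm{cl}(H)\subseteq U$.

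The ``delicate point'' in your Step 1 is immediate and does not need where $C$ meets $V(H)$. By induction both ends of $T_i$ already lie in $\mathrm{cl}(H)$, while $x\notin\mathrm{cl}(H)$. So the parts of $C$ from $x$ to the tail of $T_i$ and from the head of $T_i$ back to $x$ each contain a crossing edge; these are the paper's first and last edges of $C$ with an end in $\mathrm{cl}(H)$.
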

\begin{proof}
We first show that $\bigcup_{j=0}^i V(T_j) \subseteq \mathrm{cl}(H)$ by induction on $i$; this holds for $i=0$ since $V(H) \subseteq \mathrm{cl}(H)$ by definition. Assuming $\bigcup_{j=0}^i V(T_j)\subseteq \mathrm{cl}(H)$, then since $T_{i+1}$ is a subtrail of $C-x$ which is disjoint from the previous ears, and $\delta(\mathrm{cl}(H))$ contains at most two edges from $C$, it must be that $V(T_{i+1})\subseteq \mathrm{cl}(H)$ as needed. Indeed otherwise $\delta(\mathrm{cl}(H))$ contains at least two edges in $E(T_{i+1})$ as well as the first and last edge of $E(C)$ with an end in $\mathrm{cl}(H)$.

Fix $U = V(H) \cup \bigcup_{i=1}^{k} V(T_i)$.  By the previous paragraph, $U \subseteq \mathrm{cl}(H)$.  Let $T'_1, \ldots, T'_{k}$ be an ordering of the sequence $T_{1}, \ldots, T_{k}$ such that $T' \coloneqq T'_1 \ldots T'_k$ is a subtrail of $C-x$.  If a vertex in $U$ occurs along $C-x$ before the beginning of $T'$, let $T_{k+1}$ be the subtrail of $C-x$ from the last such occurrence to the beginning of $T'$.  Then $T_0,T_1,\dots,T_{k+1}$ is an ear decomposition of $H$ into $C$ violating our choice of a maximal ear decomposition.  Similarly, if a vertex of $U$ occurs along $C-x$ strictly after the end of $T'$, we again get a contradiction to the maximality of our ear decomposition.  
We conclude that the occurrences of vertices of $U$ along $C$ form exactly the interval occupied by $T'$, and therefore $|\delta(U)\cap E(C)|=2$.
As $V(H) \subseteq U$, it follows that $ \mathrm{cl}(H) \subseteq U$, completing the proof.
%\PW{The original proof for $ \mathrm{cl}(H) \subseteq U$ was incorrect as written - chatgpt found an example where the argument fails.  Together, we implemented the above correction.}  
%Let $G'$ be the subgraph of $G$ containing only edges in $E(C) \cup E(H)$.
%We claim that every edge $e$ of $G'[\mathrm{cl}(H)]$ is in one of the trails $E(T_i)$, $i \in [k]$; this suffices to complete the proof since $G'[\mathrm{cl}(H)]$ is connected by minimality of the closure.
%Indeed, assume that there is an edge $e \in G'[\mathrm{cl}(H)]\setminus \bigcup_{j=0}^k E(T_j)$. Consider the unique maximal subtrail $T_{k + 1}$ of $C-x$ contained in $G'[\mathrm{cl}(H)] -  \bigcup_{j=0}^k E(T_j)$ and containing $e$; this exists by the definition of closure, and its endpoints lie in $\bigcup_{j = 0}^{k} V(T_{j})$. By maximality $T_{k+1}$, either $T_{k + 1}T'$ or $T'T_{k+1}$ is a subtrail of $C- x$. 
%This completes the proof as the ear decomposition is maximal.
\end{proof}

We note that while the above establishes that the underlying set of vertices in the maximal ear decomposition is unique, the number $k$ of ears or their order $T_1,\dots,T_k$ is not necessarily unique. However, this notion still gives rise to a meaningful way of measuring the complexity of unused components of degree one in an $x$-packing, which we now define. 

For a flexible unused component $H$ of $\C$ adjacent to $C \in \C$ in $\mathsf{comp}(\C)$, we let the \emph{rank of $H$ in $\C$}, denoted by $\mathrm{rank}_{\C}(H)$, be the minimum $i \in \mathbb{N}$ such that there is a maximal ear decomposition $T_0,T_1,\dots,T_k$ of $H$ into $C$ and some $C'\in \C$ different from $C$ with the property that $V(T_i)\cap V(C')\neq \emptyset$.  For flexible unused components Lemmas \ref{lem:rankwelldfn} and \ref{lem:maximalEarDecomp} imply that such $i$ always exists, and also by definition $\rank_\C(H)\geq 1$.
For an inflexible unused component $H$ of $\C$ we define $\mathrm{rank}_{\C}(H) = \infty.$
Furthermore, we define the \textit{rank of $\C$}, denoted by $\mathrm{rank}(\C)$, to be the minimum of $\mathrm{rank}_{\C}(H)$ over all unused components $H$ of $\C$ of degree one if there is at least one such component, and $0$ otherwise.

We are now ready to proceed with the proof of the main theorem of this section which we repeat here for convenience.

\decomptocircuits*

To briefly summarize the argument, under the theorem's assumptions, we show there is an $x$-packing $\C$ with the property that every circuit has degree at most one and every unused component has degree at least two in $\mathsf{comp}(\C)$.
Consequently, this leads to a small number of unused components which can be greedily absorbed in the packing.

\begin{proof}[Proof of Theorem \ref{thm:EP}]
    We choose an $x$-packing $\C$ in $(G,\gamma)$ subject to the following conditions, satisfied in the order listed below:
    \begin{enumerate}%[label=\Alph*.]
        \item\label{cond:1} $\C$ contains exactly $\frac{\deg(x)}{2}$ many circuits and at least $k$ of these have labels outside of $\Gamma'$;
        \item\label{cond:2}  the number of edges covered by $\C$ is maximized;
        \item\label{cond:3}  the number of unused components of $\C$ of degree at least two is maximized; 
        \item\label{cond:4}  the rank of $\C$ is minimized. 
    \end{enumerate}
In general, any $x$-circuit $C$ with group value not in $\Gamma'$ which visits $x$ internally can be decomposed into two distinct $x$-circuits, at least one of which has group value not in $\Gamma'$.  Along with the fact that $G$ is Eulerian, we conclude that there exists an $x$-packing $\C$ which satisfies property \ref{cond:1}~and, consequently, properties \ref{cond:1}-\ref{cond:4}.
The assumptions of Theorem \ref{thm:EP} imply that there exists such a collection; we proceed to prove a few claims about its structure. 
        \begin{claimth}
        $\deg_\C(C)= 0$ for all circuits $C \in \C$ with $\gamma(C)\in \Gamma'$. 
    \end{claimth}
    \begin{claimproof}
        Let $C \in \C$ satisfy $\gamma(C)\in \Gamma'$ and assume for a contradiction that there is some unused component $H$ which shares a vertex with $C$. Let $C'$ be the circuit obtained by joining an Eulerian tour of $H$ to $C$ at a common vertex, and $\C'$ the collection obtained from $\C$ by swapping $C$ with $C'$. It follows that $\C'$ still satisfies condition \ref{cond:1} while the number of edges covered by $\C'$ is now strictly larger; contradiction. It follows that $\deg_\C(C)=0$ for every $C \in \C$ with $\gamma(C)\in \Gamma'$. 
    \end{claimproof}
    
    \begin{claimth}
        $\deg_\C(C)\leq 1$ for all circuits $C \in \C$ with $\gamma(C)\not\in \Gamma'$. 
    \end{claimth}
    \begin{claimproof}
        Fix some $C \in \C$ with $\gamma(C)\not\in\Gamma'$, and assume for a contradiction that $\deg_\C(C)\geq 2$, i.e. there are two distinct unused components $H_1$ and $H_2$ which both share vertices with $C$.
        Suppose
%\CM{this is a slight abuse of notation since a trail is defined as a sequence of oriented edges.}\EP{Rewrote with edges.}
        \[ C = \vec{e}_1 \dots \vec{e}_p \vec{f}_1 \vec{e}_{p+1} \dots \vec{e}_{p+\ell} \vec{f}_2 \vec{e}_{p+\ell+1} \dots \vec{e}_{p+\ell+m};\]
        where the tail of $\vec{e}_1$ is $x$, the tail of $\vec{f}_1$ is $h_1$ and $h_1 \in V(C)\cap V(H_1)$, and the tail of $\vec{f}_2$ is $h_2$ and $h_2 \in V(C)\cap V(H_2)$.
        Let $\vec{H_1}$ be an Eulerian circuit in $H_1$ starting and ending at $h_1$, and likewise define $\vec{H_2}$.
        Consider the following elements of $\Gamma$
        \[ a:=\gamma(\vec{e}_1 \dots \vec{e}_p),\quad b:=\gamma(\vec{f}_1 \vec{e}_{p+1} \dots \vec{e}_{p+\ell}), \]
        \[ c:=\gamma(\vec{f}_2 \vec{e}_{p+\ell+1} \dots \vec{e}_{p+\ell+m}),\quad \chi_1:=\gamma(\vec{H_1}), \quad \chi_2:=\gamma(\vec{H_2}), \]
        and observe that
        \[ (ab\chi_2c)\cdot(a\chi_1b\chi_2c)^{-1}\cdot(a\chi_1bc)=abc = \gamma(C) \not\in\Gamma'.\]
        It follows that one of $a\chi_1bc$, $ab\chi_2c$, and $a\chi_1b\chi_2c$ must lie outside $\Gamma'$.
        We may therefore appropriately glue $H_1$, $H_2$, or both at $C$ to obtain a strictly bigger circuit $C'$ with $\gamma(C')\not\in\Gamma'$.
        By swapping $C$ and $C'$ we obtain a new collection of circuits covering strictly more edges than before that still satisfies condition \ref{cond:1}, giving rise to a contradiction once again.
    \end{claimproof}
    \begin{claimth}
        $\deg_\C(H)\geq 2$ for all unused components $H$ of $\C$.
    \end{claimth}
    \begin{claimproof}
        By connectivity of $G$, every unused component must have degree at least one.  Assume for a contradiction that there is an unused component of degree one.
        First note that by the two claims above any unused component of degree one is flexible.
        Let $H$ be such a component with the property that $n:=\rank_\C(H)=\rank(\C) \geq 1$, i.e. its rank is minimal over all unused components of degree one.
        Let $C \in \C$ be the unique circuit of $\C$ adjacent to $H$ in $\mathsf{comp}(\C)$.
        By the first claim, we know that $\gamma(C)\not\in\Gamma'$.
        Moreover, fix a maximal ear decomposition $T_0 \coloneqq H, T_{1}, \ldots, T_k$ of $H$ into $C$ and $C'\in \C$ distinct from $C$ with the property that $V(T_{n})\cap V(C')\neq \emptyset$.
        We shall construct a new $x$-packing $\tilde \C$ with the property that, either the number of unused components of degree at least two is strictly larger than that of $\C$, or it has the same number of unused components of degree at least two as $\C$ but its rank is strictly smaller than that of $\C$; both of these lead to a contradiction to the choice of $\C$.

        Towards this goal, consider any vertex $v \in V(T_{n}) \cap V(C').$
        First note that $v$ cannot be a vertex in $\bigcup_{i=0}^{n-1} V(T_i)$ as this would contradict the minimality of $n$; in particular, $v$ cannot be an endpoint of $T_n$. Choose a pair of subtrails $T^1_{n}$ and $T^2_{n}$ by arbitrarily ``breaking'' $T_n$ at $v$, i.e. non-empty subtrails of $T_{n}$ having $v$ as head and tail, respectively, and satisfying $T_{n} = T^{1}_{n} T^{2}_{n}$.
        Since $T_{n}$ is a subtrail of $C - x$, we may also rewrite $C$ as $C = C_{1} C_{2},$ where $C_1$ is the subtrail from $x$ to $v$ containing $T^1_n$ and $C_2$ is a subtrail from $v$ to $x$ containing $T^2_n$.  
      Similarly, we let $C' = R_{1}R_{2}$ where $R_1$ is a subtrail of $C'$ from $x$ to $v$ and $R_2$ is a subtrail of $C'$ from $v$ to $x$.
        
        Next, we claim that at least one of the sets $\{ C_{1}R_{2}, R_{1}C_{2} \}$ or $\{ C_{1}R^{-1}_{1}, C^{-1}_{2}R_{2} \}$ has at least as many $x$-circuits labeled outside $\Gamma'$ as the collection $\{ C, C' \}.$
        
        Consider the following elements of $\Gamma$ 
        $$a \coloneqq \gamma(C_{1}), \quad b \coloneqq \gamma(C_{2}), \quad c \coloneqq \gamma(R_{1}), \quad  d \coloneqq \gamma(R_{2}),$$ 
        and observe that 
        $$ac^{-1} \cdot cb = ab = \gamma(C) \not\in \Gamma', \quad b^{-1}d \cdot (ad)^{-1} = (ab)^{-1} = \gamma(C)^{-1} \not\in \Gamma',$$ 
        and that 
        $$cb \cdot b^{-1}d = cd = \gamma(C'), \quad (ad)^{-1} \cdot ac^{-1} = (cd)^{-1} = \gamma(C')^{- 1}.$$
        Since $\gamma(C) \not\in \Gamma'$ it follows that either $ac^{-1} = \gamma(C_{1}R^{-1}_{1}) \not\in \Gamma'$ or $cb = \gamma(R_{1}C_{2}) \not\in \Gamma'.$
        Hence, if $\gamma(C') \in \Gamma'$, the claimed property holds for one of the two sets $\{ C_{1}R_{2}, R_{1}C_{2} \}$ or $\{ C_{1}R^{-1}_{1}, C^{-1}_{2}R_{2} \}$. On the other hand, if
        $\gamma(C') \not\in \Gamma'$ then at least one of the two factors in all four equations above must be outside $\Gamma'$ and again the desired property holds through a simple checking of the cases.  

        Let us assume without loss of generality that the claim above holds for the set $\{ C_{1}R_{2}, R_{1}C_{2} \}.$
        We complete the proof by showing that the collection $\tilde{\C}$ of $x$-circuits obtained from $\C$ by replacing circuits $C$ and $C'$ with circuits $C_{1}R_{2}$ and $R_{1}C_{2}$ yields a contradiction to the choice of $\C$.

        We first verify that the collection $\tilde{\C}$ is at least as good as $\C$ with respect to conditions \ref{cond:1}, \ref{cond:2}, and \ref{cond:3}.
        Condition \ref{cond:1} is clearly satisfied given the previous claim.
        Moreover, the edges covered by $\tilde{\C}$ are by definition precisely the same as the edges covered by $\C$, and so condition \ref{cond:2} is also satisfied. 
        Consider an unused component $K$ of $\C$ of degree at least two.  Any adjacency to a circuit other than $C$ and $C'$ is also present in $\mathsf{comp}(\tilde{\C})$, and $K$ cannot meet $C$ since otherwise $K=H$.  If $K$ meets $C'$, then $K$ also meets at least one of $C_{1}R_{2}$ or  $R_{1}C_{2}$.  We conclude $K$ is of degree at least two in $\mathsf{comp}(\tilde{\C})$ and condition \ref{cond:3} holds for $\tilde{\C}$.

        There are now two cases to distinguish.

        \smallskip\noindent
        \textbf{Both $C_1R_2$ and $R_1C_2$ intersect $H$:} By definition, this means that the degree of $H$ in $\mathsf{comp}(\tilde{\C})$ is two.
        Combined with the observations above, this implies that $\tilde{\C}$ strictly improves on condition~\ref{cond:3} over $\C$ which gives us the desired contradiction.

        \smallskip\noindent
        \textbf{Only one of $C_1R_2$ and $R_1C_2$ intersects $H$:} By assumption, $H$ has degree exactly one in $\mathsf{comp}(\tilde \C)$.  Since $\tilde{\C}$ satisfies conditions \ref{cond:1}-\ref{cond:3}, the first two claims hold for $\tilde{\C}$ as well and we see that the unique circuit meeting $H$ has degree one in $\mathsf{comp}(\tilde \C)$ and label not in $\Gamma'$.  Thus, $H$ is flexible and its rank is finite.  In this case it must be that $n=\rank_\C(H)$ is at least two. 
        Indeed, if $n = 1,$ then by definition of ear decompositions, the endpoint of each $T^{i}_{n}$ for $i \in [2]$ that is not $v$ is a vertex of $H$.
        Therefore by construction both $C_{1}R_{2}$ and $R_{1}C_{2}$ would intersect $H,$ contrary to our assumption. 
        
        Now, without loss of generality assume that $V(C_{1}R_{2}) \cap V(H) \neq \emptyset$. Let $T^*$ be the subtrail of $C-x$ obtained from concatenating the ears $T_{1}, \ldots, T_{n-1}$ in the appropriate order.
        Note that, since $n \geq 2,$ $T^*$ is non-empty.
        Additionally, $v \not\in V(T^*)$ by our assumptions, implying in particular that $T^*$ is either a subtrail of $C_1 - x$, or a subtrail of $C_2 -x$. Moreover, $V(T^*) \cap V(H) \neq \emptyset$ while $V(R_1C_2)\cap V(H) =\emptyset$, implying that $T^*$ is in fact a subtrail of $C_1-x$, and so in particular a subtrail of $C_{1}R_{2} - x$. As such, $H,T_1,\dots,T_{n-1}$ is a valid ear decomposition of $H$ into $C_1R_2$. 
        Let $w$ be the endpoint of $T^2_n$ other than $v$. By the definition of an ear, $w \in V(H)\cup V(T^*)$. Since $ w \in V(C_2)$ and $V(C_2) \cap V(H)=\emptyset$, we have $w \in V(T^*)$.  Given that every ear decomposition can be trivially extended to a maximal ear decomposition, it follows that the $x$-circuit $R_1C_2 \in \tilde \C$ witnesses that the rank of $H$ in $\tilde \C$ is at most $n-1$. In particular, the rank of $\tilde \C$ is strictly smaller than that of $\C$, a contradiction completing the proof of the claim.
    \end{claimproof}
    
    Let $q$ be the number of circuits of the packing with label not in $\Gamma'$, and let $r$ be the number of unused components. 
    The claims imply $2r \le q$. Choose for each unused component one incident circuit; these $r$ chosen circuits are distinct and all have labels not in $\Gamma'$.  Insert an Euler tour of each unused component into its assigned circuit. Concatenate the resulting $r$ circuits and all circuits of the original packing labeled in $\Gamma'$, at $x$, to form a single circuit $D$, provided the collection is non-empty. The other $q-r$ circuits together with $D$ cover every edge of $G$. If $D$ is absent, the unchanged circuits already form the required decomposition. If $D$ is present and $\gamma(D)$ is not in $\Gamma'$, keep the circuit $D$. Otherwise concatenate $D$ at $x$ with one of the $q-r$ unchanged circuits; the resulting circuit has label not in $\Gamma'$. In every case, the resulting decomposition has at least $q-r \ge \lceil q/2 \rceil \ge \lceil k/2\rceil$ circuits.
    \end{proof}
    
\section{Making an immersion near-full}\label{sec:NearFull}

We first discuss the ``universal'' group-labeled graphs which we look for as (near-full) immersions.

For positive integers $k$ and $n$, we define the \emph{$(k, n)$-flower} to be the graph on $n + 1$ vertices $x,y_1,\dots,y_n$ so that for each $i \in [n]$, there are $k$ parallel edges between $x$ and $y_i$.
We call $x$ the \emph{center vertex} of the flower.
If $\phi$ is an immersion of a flower into a graph $G$, then we call the branch vertex of $\phi$ corresponding to the center of the flower the \emph{branch center}.
A \emph{petal} is a subgraph of the flower consisting of $x$, some $y_i$, and the $k$ parallel edges between them.
Note that if $H$ is a graph on $n$ vertices where each vertex has degree at most $k$, then the $(k, n)$-flower admits $H$ as an immersion; we map $V(H)$ to $\{y_1,y_2,\dots,y_n\}$, and then we greedily map $E(H)$ to two-edge walks of the flower.

Let $\Gamma$ be a finite group.
The \emph{$(\Gamma,k,n)$-rich flower} is the $\Gamma$-labeled graph where the graph is the $(|\Gamma|k,n)$-flower and, for all $i\in[n]$ and $\alpha\in\Gamma$, there exist $k$ oriented edges from $x$ to $y_i$ with label $\alpha$.
Note that if $(H,\gamma_H)$ is a $\Gamma$-labeled graph on $n$ vertices where each vertex has degree at most $k$, then the $(\Gamma,k,n)$-rich flower admits $(H,\gamma_H)$ as an immersion.
To see this, first map $V(H)$ to $\{y_1,y_2,\dots,y_n\}$ as before.
Then select one arbitrary orientation $\vec{e}$ of each edge $e$ of $H$, and map $\vec{e}$ to a two-edge trail which begins with an edge labeled by the identity and oriented towards the center, and ends with a $\gamma_H(\vec{e})$-labeled edge oriented away from the center.
%\PW{it was necessary to switch from "at most k edges" to "deg at most k" to deal with loops.

The goal of this section is to prove the following theorem which essentially allows us to reduce the question of finding large rich flowers as near-full immersions in $4$-edge-connected Eulerian group-labeled graphs to finding (even larger) rich flowers as immersions.

\begin{restatable}{thm}{flowerReduction}
\label{thm:flowerReduction}
    Let $\Gamma$ be a finite group and $k$ and $n$ be positive integers. Let $k'$ and $n'$ be integers such that $k' \geq k|\Gamma|^{4 + \log_2|\Gamma |} + 1$ and $n' \geq n|\Gamma|$. Suppose $(G,\gamma)$ is a 4-edge-connected Eulerian $\Gamma$-labeled graph such that there exists an immersion $\phi'$ of a $(\Gamma,k',n')$-rich flower with center $x'$ in $(G,\gamma)$.  Then there exists an $x$-near-full immersion $\phi$ of a $(\Gamma,k,n)$-rich 
  flower with center $x$ in $(G,\gamma)$.  Moreover, if $X$ and $X'$ are the sets of branch vertices of $\phi$ and $\phi'$, respectively, then $X \subseteq X'$ and, specifically, $\phi'(x') = \phi(x)$.  
\end{restatable}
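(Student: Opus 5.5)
Set $x:=\phi'(x')$ and look at the $n'$ petals of $\phi'$. Petal $i$ contributes $|\Gamma|k'$ edge-disjoint trails from $x$ to $y_i':=\phi'(y_i)$, with $k'$ trails of each label $\alpha\in\Gamma$ under an accommodating labeling $\gamma'$; since shifting preserves the existence of immersions, we work with $\gamma'$ throughout. The plan is to turn a bounded number of these trails into circuits through $x$, use Theorem~\ref{thm:EP} to absorb everything else into circuits through $x$, and then split those circuits at the branch vertices $y_i'$ to rebuild a smaller rich flower whose leftover edges form at most one component, which contains $x$.

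\textbf{Step 1 (circuits through $x$).} Fix a petal index $i$. Concatenating a trail of label $\alpha$ with the inverse of a trail of label $\beta$ gives a circuit through $x$ of label $\alpha\beta^{-1}$ that passes through $y_i'$. I want many such circuits whose labels avoid a chosen proper subgroup. The idea is to find a subgroup $\Gamma'$ such that the leftover edges, once absorbed into these circuits, do not spoil the labels we need.
\begin{itemize}
\item Run an iterative subgroup-descent argument. Either there are many $(x,\Gamma')$-circuits for every proper subgroup $\Gamma'$, or one passes to a smaller subgroup.
\item This is where the factor $|\Gamma|^{4+\log_2|\Gamma|}$ in $k'$ should come from: chains of subgroups have length at most $\log_2|\Gamma|$, and each step loses a factor polynomial in $|\Gamma|$.
\item The factor $|\Gamma|$ in $n'\ge n|\Gamma|$ should come from pigeonholing petals by the coset or shift data at the $y_i'$, keeping $n$ petals that behave identically.
\end{itemize}

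\textbf{Step 2 (decomposition).} Apply Theorem~\ref{thm:EP} to $G$ at $x$ with the packing from Step~1. This yields an $(x,\Gamma')$-decomposition $\mathcal{D}$ of all of $E(G)$ of size at least half the packing size. The difficulty is that these circuits need not pass through the $y_i'$ in a controlled way.

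\textbf{Step 3 (rebuild the flower).} I therefore instead want to use the proof mechanism of Theorem~\ref{thm:EP}. Keep the original packing circuits, each marked by the vertex $y_i'$ it visits, and absorb unused components only by gluing Euler tours into circuits, as in the final paragraph of that proof. Gluing preserves the property of visiting $y_i'$.
\begin{itemize}
\item Each absorbed circuit through $y_i'$ splits at $y_i'$ into two trails from $x$ to $y_i'$. Their labels $a$ and $b$ satisfy $ab^{-1}\notin\Gamma'$.
\item To obtain all labels $\alpha\in\Gamma$, $k$ times each, re-pair halves and use many spare un-absorbed petal trails of prescribed labels. Only boundedly many circuits receive glued material, so most petal trails keep their exact labels.
\item The circuits that did receive glued material are reassigned to a designated ``junk'' role. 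Their edges are left unused but stay attached to $x$, which is exactly what near-fullness at $x$ permits. Equivalently, form one big circuit $D$ through $x$ containing all absorbed material together with all unused petal trails, and leave $D$ unused as the single component containing $x$.
\end{itemize}

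On reflection, Step~3's last bullet suggests a much simpler route, and I would try it first. Take $n$ petals of $\phi'$ and keep $k$ trails of each label for each of them. Every other edge of $G$ must lie in one component together with $x$. The leftover graph has all degrees even apart from its attachment to the kept trails, so the only danger is components of $G$ minus the kept trails that miss $x$. Such a component can be eliminated by rerouting one kept trail through it, provided the new trail keeps its label, and this is where $\Gamma'$-control and Theorem~\ref{thm:EP} enter.

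\textbf{Main obstacle.} The hard step is doing this rerouting while preserving exact labels, not merely labels outside a subgroup. Theorem~\ref{thm:EP} only controls membership in $\Gamma'$, so I expect to need a recursion on subgroups together with the surplus $k'/k$ to recover exact labels. Label-preservation plus the descent argument is what should consume the factor $|\Gamma|^{4+\log_2|\Gamma|}$.
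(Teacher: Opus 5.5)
Your proposal does not close the gap you name in your final paragraph, and both routes you sketch fail at exactly that point. In the ``simpler route'', every non-kept petal trail passes through $x$. So a component of $G$ minus the kept trails that misses $x$ meets the rest of $G$ only along kept trails. Splicing its Euler tour into a kept trail $T$ with $\gamma'(T)=ab$ changes the label to $a\chi b$ with $\chi$ uncontrolled. Nothing in your setup bounds the number of such components in terms of $k$, $n$, $|\Gamma|$, so the surplus $k'/k$ cannot pay for them one at a time; moreover, changing which trails are kept changes the components.

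In Step~3, the claim that only boundedly many circuits receive glued material is not what Theorem~\ref{thm:EP} provides. Its last step glues unused components into up to $q/2$ circuits. Its proof also re-pairs halves of circuits ($C_1R_2$, $R_1C_2$) coming from different petals, so the resulting circuits remember neither a vertex $y_i'$ nor their labels. Your Step~1 packing is fine, and the paper uses exactly this pairing. But the proposed subgroup ``descent'' and the coset-pigeonholing explanation of $n'\ge n|\Gamma|$ do not match what the bounds actually measure.

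The missing idea is to never ask for exact labels while rerouting. The paper maintains an $x$-near-full immersion of a $(\Gamma',\hat k,\hat n)$-generating flower, all of whose labels lie in $\Gamma'$, and makes $\Gamma'$ grow: the recursion goes up, not down. It starts from $\Gamma'=\{1\}$ by extending $\phi'$ to a full immersion of the unlabeled $(|\Gamma|k',n')$-flower and applying Lemma~\ref{lem:ZeroFlower}. Near-fullness is automatic there, since every flower edge meets the center. Since $\phi'$ yields an $(x,\Gamma')$-packing of size $k'n'$ for every proper $\Gamma'$ and every shifting, Theorem~\ref{thm:EP} gives an $(x,\Gamma')$-decomposition of all of $E(G)$.

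The tool you lack is Lemma~\ref{lem:uncrossing}. Require the decomposition to have at least $s$ circuits labeled outside $\Gamma'$, and choose it to minimize the symmetric difference of its transitions with those of the immersion. This forces three things:
\begin{enumerate}
\item every branch trail lies entirely in the circuits labeled outside $\Gamma'$ or entirely in the others;
\item there are at most $s+1$ circuits labeled outside $\Gamma'$;
\item each of them contains at most two end-edges of branch trails at $x$.
\end{enumerate}
Hence only $2s+2$ flower edges are lost, the outside-$\Gamma'$ circuits become loops at $x$, and the leftover edges stay on circuits through $x$, preserving near-fullness.

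Pigeonhole these loops to a single label $\alpha\notin\Gamma'$ and concatenate them with identity-labeled petal trails. This gives a generating flower for $\langle\Gamma',\alpha\rangle$, whose order is at least $2|\Gamma'|$. Each round halves the petals and divides the multiplicity by about $4|\Gamma|$, which is where $k|\Gamma|^{4+\log_2|\Gamma|}$ and $n|\Gamma|$ come from. Exact labels are recovered only at the very end, by Lemma~\ref{lem:GenFlowerUniversal}, once the generated subgroup is all of $\Gamma$.
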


The main technical difficulty is the following lemma, which is used to ``disentangle'' an $(x, \Gamma')$-decomposition from a flower immersion. 

\begin{lemma}\label{lem:uncrossing}
    Fix an integer $k \geq 2$.
    Let $\Gamma$ be a finite group, let $\Gamma'$ be a proper subgroup of $\Gamma$, and let $(G,\gamma)$ be a 4-edge-connected Eulerian $\Gamma$-labeled graph.
    Suppose that $(G,\gamma)$ contains an $x$-near-full immersion $\varphi$ of a $\Gamma'$-labeled Eulerian graph $(H, \gamma_H)$ where $H$ has every edge incident to a single vertex $x$.  Let $\gamma_G'$ be an accommodating labeling of the immersion.  
    Furthermore, suppose that $(G,\gamma_G')$ contains an $(\varphi(x),\Gamma')$-decomposition of size at least $k$.
    Then $(G,\gamma)$ has an $x$-near-full immersion of a graph obtained from $(H,\gamma_H)$ by deleting at most $2k+2$ edges and adding $k$ loops at $x$ with label outside $\Gamma'$.
\end{lemma}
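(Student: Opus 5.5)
The plan is to build the $k$ new loops out of the circuits of the given decomposition, and to pay for them by deleting a few edges of $H$ and rerouting the branch trails they cross. Write $\varphi(x)=x$ for brevity, and fix $k$ circuits $D_1,\dots,D_k$ of the $(x,\Gamma')$-decomposition with respect to $\gamma_G'$. Since the accommodating labeling $\gamma_G'$ is a shifting of $\gamma$, it is also an accommodating labeling for any modified immersion we build. So every label computation can be done in $\gamma_G'$, where each branch trail of $H$ has label in $\Gamma'$ and each $D_i$ has label outside $\Gamma'$.

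I would proceed one circuit at a time, by induction on $k$. The inductive step would take a current $x$-near-full immersion whose non-loop part is still $\Gamma'$-labeled and a circuit $D=D_i$ that meets the previously created loops in no edge. It would produce a new $x$-near-full immersion with one extra loop at $x$ labeled outside $\Gamma'$, after deleting at most two edges of the original $H$, and with the remaining circuits $D_{i+1},\dots,D_k$ not used by the new loop. The final $+2$ in the bound $2k+2$ is reserved for a last repair of near-fullness, described below.

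The single step works as follows. Walk along $D$ from $x$ and split it into maximal segments, each lying either in the unused component $U$ or in a single branch trail. A segment lying in $U$ is free. For a segment inside a branch trail $P$ of an edge $xy$ or of a loop at $x$, I would reroute. Cut $P$ at the two ends $u,v$ of the segment, say $P=P_1P_2P_3$ with $P_2$ running from $u$ to $v$. Then exchange $P_2$ for the complementary part of $D$ between $u$ and $v$, exactly as in the exchange argument in the proof of Theorem~\ref{thm:EP}, where $\{C,C'\}$ was replaced by $\{C_1R_2,R_1C_2\}$ or $\{C_1R_1^{-1},C_2^{-1}R_2\}$. The product identities used there (for example $ac^{-1}\cdot cb=ab\notin\Gamma'$) show that among the two recombinations, one yields either
\begin{enumerate}
\item a closed $x$-trail with label outside $\Gamma'$ that meets fewer branch trails, or
\item a new branch trail for $xy$ with the same label modulo nothing lost, namely label still in $\Gamma'$ and hence realigned by a shift at $y$.
\end{enumerate}
Case 2 needs care: the label of the new branch trail must equal $\gamma_H(\vec e)$ exactly, not merely lie in $\Gamma'$. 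Whenever this exactness fails for a trail $P$, I would simply delete the edge of $H$ whose branch trail is $P$ and absorb its edges into the loop under construction. I would choose an extremal counterexample that minimizes the number of branch trails crossed by $D$, and then argue, as in the rank argument of Section~\ref{sec:packing}, that at most two such deletions are needed per circuit: one at the first crossing and one at the last. The main obstacle is precisely this bookkeeping. It requires showing that the exchanges either strictly decrease the number of crossings or terminate with at most two sacrificed edges, while never using edges of $D_{i+1},\dots,D_k$ or of earlier loops. The first thing I would try is an ordering by the position along $D$ of the first crossing, paired with a potential counting crossed trails.

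Finally, I would check near-fullness. All edges freed by deleting branch trails of $H$-edges lie on trails through $x$, so they join the unused component containing $x$. The only risk is that $U$ gets split by rerouting, or that a leftover unused piece is separated from $x$. Using $4$-edge-connectivity, as in Lemma~\ref{lem:rankwelldfn}, I would find one further branch trail meeting such a piece. Deleting that edge of $H$ (at most twice overall, giving the extra $+2$) reconnects everything to $x$ and yields an $x$-near-full immersion of the required graph.
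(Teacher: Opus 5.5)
Your proposal has a genuine gap, and it sits in the step you call bookkeeping. The problem is structural rather than a matter of counting.

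All of your exchanges modify branch trails of $H$, whose labels must be preserved \emph{exactly*}. The product identities from the proof of Theorem~\ref{thm:EP} only control membership in $\Gamma'$. Suppose a branch trail $P=P_1P'$ from $x$ to $y$ is recombined with a circuit $D=D_1D_2$ at a common vertex $u$. The new $x$--$y$ trail has label $\gamma_G'(D_1)\gamma_G'(P')$ or $\gamma_G'(D_2)^{-1}\gamma_G'(P')$, which in general differs from $\gamma_G'(P_1)\gamma_G'(P')$. A shift at $y$ cannot fix this, because it multiplies every branch trail ending at $y$ (for instance all parallel edges $xy$) by the same element, and shifts at internal vertices change no trail label. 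So generically every crossing of $D$ with a branch trail costs a deletion. A circuit of the given decomposition may cross arbitrarily many branch trails, so nothing supports ``at most two deletions per circuit, at the first and last crossing''. Read literally, replacing $P_2$ by the complementary part of $D$ also routes $P$ through the edges of the other branch trails that $D$ meets, which breaks edge-disjointness. The final near-fullness repair with at most two further deletions is likewise only asserted. In fact the $+2$ in the statement has a different origin.

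The missing idea is to reverse the roles. Never alter the branch trails; instead re-choose the circuit decomposition, where only the robust property ``label outside $\Gamma'$'' has to survive.

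\begin{itemize}
\item First extend $\varphi$ to a full immersion by adding the residual circuit as a loop $\hat e$. Among all partitions of $E(G)$ into $x$-circuits with at least $k$ labels outside $\Gamma'$, take one minimizing the symmetric difference between its transitions and those of $\varphi$. Let $E_1$ and $E_2$ be the edge sets of the circuits labeled outside and inside $\Gamma'$, respectively.
\item Suppose a transition $(\vec e,\vec f)$ of a branch trail passes from an outside circuit $C_i=C_i^1C_i^2$ (split after $\vec e$) to an inside circuit $C_j=C_j^1C_j^2$ (split before $\vec f$). Swapping to $C_i^1C_j^2$ and $C_j^1C_i^2$ gains that transition and loses only non-$\varphi$ transitions. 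The identity $\gamma_G'(C_i)=\gamma_G'(C_i^1C_j^2)\gamma_G'(C_j)^{-1}\gamma_G'(C_j^1C_i^2)$ keeps one of the new circuits outside $\Gamma'$. Hence each branch trail lies wholly in $E_1$ or wholly in $E_2$.
\item Similar exchanges show that the number $m$ of outside circuits satisfies $m\le k+1$: if $m\ge k+2$, rebuild an outside circuit from whole $\Gamma'$-labeled branch trails.
\item They also show that each outside circuit contains at most two edges that begin or end a branch trail at $x$; otherwise, split it at an internal visit to $x$.
\end{itemize}

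Since every edge of $H$ meets $x$, each branch trail in $E_1$ contains such an edge. So at most $2m\le 2k+2$ edges of $H$ are deleted. The circuits $C_1,\dots,C_k$ become the new loops. The leftovers are $C_{k+1}$ if $m=k+1$, and possibly $\varphi(\hat e)$; both are circuits through $x$, so near-fullness is automatic.
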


\begin{proof}
    For the duration of the proof, labels refer to the fixed shifting $\gamma_G'$ of $\gamma$.  We first extend the $x$-near-full immersion $\varphi$ to a full immersion of the graph obtained from $(H, \gamma_H)$ by possibly adding a single loop $\hat{e}$ at $x$ with the appropriate $\gamma_G'$-label (if the $x$-near-full is in fact a full immersion, then we do not add $\hat{e}$). For notational convenience we will assume $x = \varphi(x)$. We partition $E(G)$ into circuits $\mathcal{C} = \{C_1, \dots, C_\ell\}$ each beginning and ending at $x$ such that
    \begin{enumerate}
        \item at least $k$ of them have label outside $\Gamma'$, and
        \item the size of the symmetric difference of transitions of $\varphi$ and transitions of $\mathcal{C}$ is minimized.
    \end{enumerate}
     We note that we minimize transitions with respect to $\varphi$ only after extending it to a full immersion. Some choice of $\mathcal{C}$ exists because $(G,\gamma_G')$ contains an $(x, \Gamma')$-decomposition of size at least $k$. Order the circuits such that $C_1, \dots, C_m$ have label outside $\Gamma'$, and $C_{m+1}, \dots, C_\ell$ have label in $\Gamma'$ for some $m \geq k$. We partition $E(G)$ into $E_1 = \bigcup_{i=1}^m E(C_i)$ and $E_2 = \bigcup_{i=m+1}^\ell E(C_i)$.

    We claim that for any branch trail $T$ of $\varphi$, either every edge of $T$ is contained in $E_1$ or every edge of $T$ is contained in $E_2$. Suppose otherwise, and let $\vec{e}, \vec{f}$ be a pair of edges in $T$ such that $(\vec{e}, \vec{f})$ is a transition of $T$ and $e$ is in a circuit $C_i$, $f$ is in a circuit $C_j$, and exactly one of $i,j$ is in $[m]$. By possibly reversing the orientation of $T$ we may assume $i \in [m]$ and $j \in [\ell] \setminus [m]$, and similarly we may orient $C_i, C_j$ such that $\vec{e}$ is in $C_i$ and $\vec{f}$ is in $C_j$. Let $C_i = C^1_iC^2_i$ such that $\vec{e}$ is the last edge of $C^1_i$, and let $C_j = C^1_jC^2_j$ such that $\vec{f}$ is the first edge of $C^2_j$, where possibly $C^2_i$ or $C^1_j$ is empty.   If both are empty, consider removing $C_i$ and  $C_j$ from $\mathcal{C}$ and adding $C^1_iC^2_j = C_iC_j$. This circuit has label $\gamma_G'(C_i)\gamma_G'(C_j) \not\in \Gamma'$, so the same number of circuits in the new collection are labeled outside $\Gamma'$. Furthermore this adds the transition $(\vec{e}, \vec{f})$ to the collection, contradicting the choice of $\mathcal{C}$.
    
    Hence we may assume one of $C^2_i, C^1_j$ is nonempty. Consider replacing $C_i, C_j$ in $\mathcal{C}$ by the circuits $C^1_iC^2_j$, $C^1_jC^2_i$. Note that this adds the transition $(\vec{e}, \vec{f})$ to the collection while only removing transitions which do not appear in $\varphi$. Thus $C^1_iC^2_j$, $C^1_jC^2_i$ is a better choice of circuits unless $\gamma_G'(C^1_iC^2_j), \gamma_G'(C^1_jC^2_i) \in \Gamma'$. However because $\gamma_G'(C_j) \in \Gamma'$ this would imply
    \begin{align*}
        \gamma_G'(C_i) &= \gamma_G'(C^1_i)\gamma_G'(C^2_i)\\
        &= \gamma_G'(C^1_i)\gamma_G'(C^2_j)\gamma_G'(C^2_j)^{-1}\gamma_G'(C^1_j)^{-1}\gamma_G'(C^1_j)\gamma_G'(C^2_i)\\
        &= \gamma_G'(C^1_iC^2_j)\gamma_G'(C_j)^{-1}\gamma_G'(C^1_jC^2_i)\\
        &\in \Gamma',
    \end{align*}
    a contradiction. This completes the proof that every trail of $\varphi$ has all edges entirely contained in one of $E_1, E_2$.

We now claim that there are at most $k+1$ circuits with label outside
$\Gamma'$ in $\mathcal C$, that is, $m\leq k+1$.
Suppose for a contradiction that $m\geq k+2$.
First, we observe that every branch trail of $\varphi$ has all its edges in a single
circuit of $\mathcal C$.
If not, some branch trail has a transition $(\vec e,\vec f)$
whose edges belong to distinct circuits $C_i$ and $C_j$.
Reconnecting these circuits as above adds this transition and removes
only transitions not belonging to $\varphi$.
The resulting collection still contains at least $m-2\geq k$ circuits
with label outside $\Gamma'$, contradicting the choice of $\mathcal C$.

Choose a circuit $C_i$ with label outside $\Gamma'$ which does not
contain any edge of $\varphi(\hat e)$, if $\hat e$ is defined.
Let $J$ be the subgraph of $H$ consisting of the edges whose branch
trails have their edges in $C_i$.
Since the extended immersion is full, these branch trails partition
$E(C_i)$. By construction, every vertex of $J$ has even degree.  Moreover, every edge of $J$ is incident with $x
$, so we conclude that $J$ is connected and has an Eulerian tour beginning and ending at $x$.

Following this Eulerian tour through the corresponding branch trails
gives a circuit $D$ beginning and ending at $x$ with $E(D) = E(C_i)$ but possibly changing the order and orientation of the edges of $C_i$.  
Every edge of $J$ has label in $\Gamma'$, and hence
$\gamma_G'(D)\in\Gamma'$.
Furthermore, $D$ contains every transition of $\varphi$ on these
edges.
The circuit $C_i$ must omit at least one such transition, as otherwise
it would itself be a concatenation of complete branch trails,
possibly reversed, and would therefore have label in $\Gamma'$, which is not the case.  
Since $C_i$ and $D$ have the same number of transitions, it follows that replacing
$C_i$ by $D$ strictly decreases the symmetric difference with the
transitions of $\varphi$.
This replacement leaves $m-1\geq k$ circuits with label outside
$\Gamma'$, again contradicting the choice of $\mathcal C$.
We conclude that $m\leq k+1$.

Hence there are at most $k+1$ circuits in $\mathcal{C}$ with label outside $\Gamma'$. Let $H'$ be the subgraph of $H$ which includes only edges whose corresponding trails are contained in $E_2$. We claim that $H'$ is formed from $H$ by deleting at most $2k+2$ edges. Let $E'$ be the set of edges incident to $x$, including loops, which appear as the first or last edge of a branch trail of $\varphi$ beginning or ending at $x$, respectively.  We will show that for any circuit $C_i$ with $i \in [m]$, $C_i$ contains at most two edges in $E'$. This implies the claim as follows. Note that as every edge in $H$ is incident to $x$, every branch trail of $H$ contains some edge in $E'$. Then an edge $e$ is in $E(H) \setminus E(H')$ if and only if $\varphi(e)$ contains an edge of $E_1 \cap E'$. Thus $|E(H)\setminus E(H')|\leq |E_1 \cap E'| \leq 2m \leq 2k+2$, proving the claim.

Suppose to reach a contradiction that a circuit $C_i$ for $i \in [m]$ contains at least three edges in $E'$. 
Choose one of these edges which is neither the first nor the last edge of $C_i$, and consider its end at $x$ where its branch trail begins or ends. This end occurs at an internal visit of $C_i$ to $x$, giving a transition $(\vec e,\vec f)$ with the head of $\vec e$ and the tail of $\vec f$ equal to $x$. Since the chosen end is an end of a branch trail, this transition does not belong to $\varphi$.
Let $C_i = C_i^1C_i^2$ where the last edge of $C_i^1$ is $\vec{e}$ and the first edge of $C_i^2$ is $\vec{f}$. Note that $\gamma_G'(C_i) = \gamma_G'(C_i^1)\gamma_G'(C_i^2)$, so one of these circuits has label outside $\Gamma'$. Then we can modify $\mathcal{C}$ by replacing $C_i$ with the two circuits $C_i^1, C_i^2$ to contradict the choice of $\mathcal{C}$ to minimize the symmetric difference with the transitions of $\varphi$.

We then find the desired $x$-near-full immersion as follows. On $E_1$ we take the circuits $C_1, \dots, C_k$ as immersions of loops, and on $E_2$ we take $\varphi$ restricted to $H'$. This accounts for all edges except possibly edges of $C_{k+1}$ if $m = k+1$ and  
edges of $\varphi(\hat{e})$ if it exists and its branch trail lies in $E_2$.  Each such subset of edges, if it is non-empty, forms a circuit through $x$, and we conclude that the $x$-near-full immersion exists as claimed, completing the proof.  
\end{proof}

The near-full conclusion in the statement of the lemma is necessary in general.  Already 
for $\Gamma=\mathbb Z_2$ and $\Gamma'=\{0\}$, the total label of $G$ determines the parity of $m$, and may force $m=k+1$ rather than $m=k$.

%\CMin{perhaps say something here about how we need to have a near-full because we can not choose the value of $m \in \{k, k+1\}$ due to parity constraints.}

We need one more type of flower in order to build up to rich flowers.
We say that a $\Gamma$-labeled graph is a \emph{$(\Gamma,k,n)$-generating flower} if there exists a generating set $S$ for $\Gamma$ so that $1 \in S$, the graph is an $(|S|k, n)$-flower, and for all $i \in [n]$ and $\alpha \in S$, there exist $k$ oriented edges from $x$ to $y_i$ with label $\alpha$.
We call $S$ the \emph{generator}.

Our main strategy will be to start with a flower with every edge labeled by the identity, and grow generating flowers for larger and larger subgroups until we have a rich flower. Hence we need the following two lemmas, which give the starting and ending point of the process.

The first is a Ramsey-type lemma for flowers in group-labeled graphs.

\begin{lemma}[{\cite{McCartyMW2025Structure}}]\label{lem:ZeroFlower}
Let $\Gamma$ be a finite group, let $k$ and $n$ be positive integers, and let $(G, \gamma)$ be a $\Gamma$-labeled graph so that $G$ is a $(k|\Gamma|, n)$-flower with center $x$. Then there is an $x$-near-full immersion of the $(\{1\}, k, n)$-rich flower with center $x$ into $(G,\gamma)$ and the peripheral branch vertices are precisely the peripheral vertices of $G$.
\end{lemma}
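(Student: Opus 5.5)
The approach is a pigeonhole argument on each petal, followed by a shift at each peripheral vertex; the near-full condition will then hold for free. First fix notation. For each $i \in [n]$ and each of the $k|\Gamma|$ edges $e$ between $x$ and $y_i$, let $\vec e$ be its orientation with tail $x$ and head $y_i$. This loses nothing, since $\gamma(\vec e^{-1}) = \gamma(\vec e)^{-1}$ determines the label of the other orientation.

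For a fixed $i$, the $k|\Gamma|$ edges of the $i$-th petal receive labels $\gamma(\vec e)$ from only $|\Gamma|$ possible values. By pigeonhole, there is some $\alpha_i \in \Gamma$ and a set $F_i$ of exactly $k$ edges of this petal with $\gamma(\vec e) = \alpha_i$ for all $e \in F_i$.

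Next, perform the shift by $\alpha_i$ at $y_i$, for every $i \in [n]$. By the definition of shifting, an oriented edge with head $y_i$ and tail $x \neq y_i$ has its label right-multiplied by $\alpha_i^{-1}$. So every $e \in F_i$ now has $\gamma'(\vec e) = \alpha_i\alpha_i^{-1} = 1$. These shifts act on distinct vertices and on disjoint sets of edges, so they do not interfere. We never shift at $x$, so no edge label is altered a second time.

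Now define $\varphi$ as follows:
\begin{itemize}
\item $\varphi(x) = x$ and $\varphi(y_i) = y_i$;
\item the $k$ edges of the $i$-th petal of the $(\{1\},k,n)$-rich flower, which as a graph is the $(k,n)$-flower with all labels $1$, are mapped bijectively to the single-edge trails $\vec e$ for $e \in F_i$, with reverse orientations mapped to $\vec e^{-1}$.
\end{itemize}
These trails are pairwise edge-disjoint, have the correct ends, and have $\gamma'$-label $1$, matching the labels of the rich flower. So $\varphi$ is an immersion with accommodating labeling $\gamma'$, and its peripheral branch vertices are exactly $y_1,\dots,y_n$.

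Finally, we check near-fullness. Every edge of $G$ not in $\bigcup_i F_i$ is an edge of the flower $G$, and hence is incident to $x$. The leftover edges therefore span a subgraph in which every nontrivial component contains $x$, so there is at most one nontrivial component, and it contains the branch center $\varphi(x) = x$. Thus $\varphi$ is an $x$-near-full immersion. There is no real obstacle in this argument. The only point needing care is the convention for shifting, namely that shifting at the head multiplies on the right by $\alpha^{-1}$; this is why we shift at the $y_i$ rather than at $x$, so that the petals can be normalized independently.
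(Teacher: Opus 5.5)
Your proof is correct. In each petal you pick $k$ edges sharing a label $\alpha_i$ by pigeonhole, then shift by $\alpha_i$ at $y_i$, which turns those labels into the identity independently for each petal. Near-fullness and the identification of the peripheral branch vertices then follow immediately, since every leftover edge is incident with $x$. The paper gives no proof of its own: it cites the earlier work and says the strengthened conclusions follow from the proof there. Your argument is the natural one for this statement, matches the $k|\Gamma|$ bound exactly, and checks those strengthened conclusions directly.
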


The second lemma shows that for large enough $k$, every $(\Gamma,k,n)$-generating flower is ``universal''.

\begin{lemma}[{\cite{McCartyMW2025Structure}}]\label{lem:GenFlowerUniversal}
Let $\Gamma$ be a finite group and $k$ and $n$ be positive integers. Then any $(\Gamma, k|\Gamma|^2, n)$-generating flower $G$ with center $x$ admits an $x$-near-full immersion of the $(\Gamma, k, n)$-rich flower with center $x$ mapping its peripheral branch vertices bijectively to the peripheral vertices of $G$.
\end{lemma}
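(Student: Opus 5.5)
We would build the immersion petal by petal. The peripheral branch vertex $y_i$ of the rich flower is mapped to the peripheral vertex $y_i$ of $G$, and the center is mapped to $x$. No shifting is needed, so $\gamma$ itself serves as the accommodating labeling. The key observation is that the near-full condition is automatic in a flower. Every edge of $G$ is incident to $x$, so whatever edges remain unused all lie in a single component containing $x$ (or there are none). It therefore suffices to find, inside each petal, $k|\Gamma|$ pairwise edge-disjoint trails from $x$ to $y_i$, exactly $k$ of them with label $\alpha$ for each $\alpha\in\Gamma$.

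\emph{Step 1: short positive words.} Since $\Gamma$ is finite and $S$ generates $\Gamma$, every element is a product of elements of $S$ using no inverses, because $s^{-1}=s^{\operatorname{ord}(s)-1}$. Consider the Cayley digraph of $\Gamma$ with respect to $S$: every vertex is reachable from $1$ along a directed path of length at most $|\Gamma|-1$. Hence each $\alpha\in\Gamma$ can be written as $\alpha=s_1s_2\cdots s_m$ with $s_j\in S$ and $1\le m\le|\Gamma|$. Here we use $1\in S$ to take $m=1$ when $\alpha=1$.

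\emph{Step 2: realizing a word as a trail.} Within the petal at $y_i$, take an oriented edge $\vec f_j$ from $x$ to $y_i$ with label $s_j$ for each $j\in[m]$, and identity-labeled edges $\vec g_1,\dots,\vec g_{m-1}$ from $x$ to $y_i$, all of them distinct. The trail
\[\vec f_1\,\vec g_1^{-1}\,\vec f_2\,\vec g_2^{-1}\cdots\vec g_{m-1}^{-1}\,\vec f_m\]
runs from $x$ to $y_i$ and has label $s_1\cdot1\cdot s_2\cdot1\cdots s_m=\alpha$. Revisiting $x$ and $y_i$ is allowed, since a trail only needs distinct edges.

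\emph{Step 3: counting.} This is the only real obstacle: we must check that the $k|\Gamma|^2$ copies of each generator suffice. Each of the $k|\Gamma|$ required trails in a petal uses at most $|\Gamma|$ edges carrying any fixed label $s\in S\setminus\{1\}$. It also uses at most $|\Gamma|-1$ connector edges plus at most $|\Gamma|$ letters equal to $1$, that is, fewer than $2|\Gamma|$ identity edges. The non-identity generators are therefore fine, needing at most $k|\Gamma|^2$ edges each. The identity label needs more care.

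For the identity label, we would first refine Step 1: choose each word with no letter equal to $1$ (except the word for $\alpha=1$ itself). A shortest path in the Cayley digraph never uses a loop, so this is free, and it gives at most $|\Gamma|-1$ identity connectors per trail. The total use of identity edges is then at most
\[k|\Gamma|\,(|\Gamma|-1)+k\le k|\Gamma|^2,\]
where the extra $k$ covers the $k$ single-edge trails realizing $\alpha=1$. So all trails fit edge-disjointly in the petal. The unused edges of all petals are incident to $x$, which yields an $x$-near-full immersion of the $(\Gamma,k,n)$-rich flower whose peripheral branch vertices are mapped bijectively onto those of $G$.
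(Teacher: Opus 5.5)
Your proof is correct. You write each element as a positive word of length at most $|\Gamma|-1$ using shortest paths in the Cayley digraph, realize each word as a zig-zag trail inside a single petal with identity-labeled edges as return connectors, and check that the $k|\Gamma|^2$ copies of each generator suffice. Near-fullness is then automatic, because every edge of a flower meets its center.

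The paper gives no proof of its own. It defers to Lemma~4.1 of \cite{McCartyMW2025Structure} and says the refinements (fixed peripheral branch vertices and $x$-near-fullness) follow immediately. I can't compare your route to that cited proof, since its contents aren't shown here, but your argument is self-contained and supplies the derivation the paper omits. Your near-fullness observation is the same reasoning the paper uses in the proof of Theorem~\ref{thm:flowerReduction}.
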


The above lemmas, as originally stated in \cite{McCartyMW2025Structure} as lemmas 4.2 and 4.1, respectively, do not specify the branch vertices beyond the center vertex of the star and do not specify that the immersion is $x$-nearly full.  The stronger statements above follow immediately from the proofs in \cite{McCartyMW2025Structure} and we omit the derivation here.

We are now ready to prove the main result which we repeat here for convenience.

\flowerReduction*

\begin{proof}
By possibly decreasing $k'$ by one, we assume $k'$ is even with $k' \geq k|\Gamma|^{4 + \log_2|\Gamma|}$.
Let $x$ be the branch center of the $(\Gamma, k', n')$-rich flower immersion. Consider arbitrarily extending this to a full immersion, so $(G,\gamma)$ has a full immersion of the $(|\Gamma|k', n')$-flower with branch center $x$ and arbitrary labels. By Lemma~\ref{lem:ZeroFlower}, this immerses the $(\{1\}, k', n')$-rich flower with branch center $x$. Because every edge of the rich flower is incident to $x$, this is in fact an $x$-near-full immersion. 
    
Thus by Observation~\ref{obs:transitivity of almost total immersions}, $(G,\gamma)$ has an $x$-near-full immersion of the $(\{1\}, k', n')$-rich flower with branch center $x$. Note also that for any proper subgroup $\Gamma'$ of $\Gamma$, and any shifting $\tilde{\gamma}$ of $\gamma$, $(G,\tilde{\gamma})$ has an $(x, \Gamma')$-packing of size $k'n'$.  This holds because the original rich flower has such a packing for every proper subgroup, and shifting simultaneously conjugates the labels of all circuits based at $x$.

We now find a $(\Gamma', \hat{k}, \hat{n})$-generating flower as an $x$-near-full immersion $\varphi$ in $(G,\gamma)$ whose branch vertices belong to $X'$ and whose center maps to $x=\phi'(x')$, such that
\begin{enumerate}
        \item $\Gamma'$ is a subgroup of $\Gamma$,
        \item $\hat{k}$ is even with $k|\Gamma|^2 \cdot (4|\Gamma|)^{\log_2(|\Gamma|/|\Gamma'|)} \leq \hat{k} \leq k'$,
        \item $n\frac{|\Gamma|}{|\Gamma'|} \leq \hat{n} \leq n'$, and
        \item subject to the above, $|\Gamma'|$ is as large as possible.
\end{enumerate}
Note that some choice exists by taking the $(\{1\}, k', n')$-rich flower.
We now claim that such a choice must have $\Gamma' = \Gamma$.
If this holds, then we have a $(\Gamma, k|\Gamma|^2, n)$-generating flower as an $x$-near-full immersion.
We then apply Lemma~\ref{lem:GenFlowerUniversal} and Observation~\ref{obs:transitivity of almost total immersions} to obtain a $(\Gamma, k, n)$-rich flower as an $x$-near-full immersion in $(G,\gamma)$, with branch vertices contained in $X'$ and branch center $\phi'(x')$, as required.

Suppose to reach a contradiction that $\varphi$ is an $x$-near-full immersion of a $(\Gamma', \hat{k}, \hat{n})$-generating flower $(H, \gamma_H)$ for $\Gamma'$ a proper subgroup of $\Gamma$.
Note that because $\hat{k}$ is even, $H$ is Eulerian.
Let $k^*$ be an even integer such that $\frac{\hat{k}}{4|\Gamma|} \leq k^* \leq \frac{\hat{k} - 2}{2|\Gamma|}$, and let $n^* = \lfloor \hat{n}/2\rfloor$.
Note that because $k \geq 1$ and $|\Gamma| \geq |\Gamma|/|\Gamma'| \geq 2$, we have $\hat{k} \geq 4|\Gamma|^3 \geq 8|\Gamma| + 4$.
Hence $\frac{\hat{k} - 2}{2|\Gamma|} \geq \frac{\hat{k}}{4|\Gamma|} + 2$, so such a choice of $k^*$ exists.
    
Let $\gamma_G'$ be an accommodating labeling for $\varphi$. By the preceding observation, $(G,\gamma_G')$ contains an $(x,\Gamma')$-packing of size $k'n' \geq \hat{k}\hat{n} \geq 2(|\Gamma|-1)k^*n^*$. By Theorem~\ref{thm:EP}, $(G,\gamma_G')$ therefore contains an $(x,\Gamma')$-decomposition of size at least $(|\Gamma|-1)k^*n^*$.
By Lemma~\ref{lem:uncrossing}, $(G,\gamma)$ contains an $x$-near-full immersion of a graph obtained from $(H,\gamma_H)$ by deleting at most $2(|\Gamma| - 1)k^*n^* + 2$ edges and adding $(|\Gamma| - 1)k^*n^*$ loops at $x$ with label outside $\Gamma'$. We then delete the $n^*$ petals which had the most edges deleted.  If $\hat{n}$ is odd, we discard one additional petal so that exactly $n^*$ petals remain.  Each remaining petal had at most $2(|\Gamma| - 1)k^* + 2$ edges deleted. By possibly ignoring some branch trails of the immersion (and utilizing Observation~\ref{obs:transitivity of almost total immersions}), we see that $(G,\gamma)$ has an $x$-near-full immersion of the graph obtained from the $(\Gamma', 2k^*, n^*)$-generating flower by adding $(|\Gamma| - 1)k^*n^*$ loops at $x$ with label outside $\Gamma'$. 
We then take $k^*n^*$ of these loops which have the same label $\alpha \in \Gamma \setminus \Gamma'$. Let $\Gamma''$ be the subgroup generated by $\Gamma' \cup \{\alpha\}$. By sacrificing $k^*$ trails with identity label on each petal, we obtain an $x$-near-full immersion of the $(\Gamma'', k^*,n^*)$-generating flower with branch center $x$. Because $\Gamma'$ is a proper subgroup of $\Gamma''$, $|\Gamma''|/|\Gamma'|$ is an integer, and hence at least 2. Thus
$$k^* \geq \frac{\hat{k}}{4|\Gamma|} \geq k|\Gamma|^2 \cdot (4|\Gamma|)^{\log_2(|\Gamma|/|\Gamma'|) - 1} \geq k|\Gamma|^2 \cdot (4|\Gamma|)^{\log_2(|\Gamma|/|\Gamma''|)}$$
$$n^* = \left\lfloor \frac{\hat{n}}{2}\right\rfloor \geq \left\lfloor \frac{|\Gamma|}{|\Gamma'|} \frac{n}{2} \right\rfloor \geq \left\lfloor \frac{2|\Gamma|}{|\Gamma''|} \frac{n}{2} \right\rfloor = \frac{|\Gamma|}{|\Gamma''|}n.$$
Therefore this immersion contradicts the choice of $\varphi$.
\end{proof}

Theorem \ref{thm:mainReduction}, which we restate for convenience, now follows easily.

\mainReduction*

\begin{proof}
Fix a vertex $x \in V(H)$.  Let $k = 2|E(H)|+2$ and $n = |V(H)|$.  Let
$(F,\gamma_F)$ be the $(\Gamma,k,n)$-rich flower.

There is an immersion $\psi$ of $(H,\gamma_H)$ into $(F,\gamma_F)$ mapping $x$ to the center of $F$ such that every unused edge of $F$ is incident with its center, making $\psi$ an $x$-near-full immersion.  

Choose $k'$ and $n'$ satisfying the bounds in Theorem~\ref{thm:flowerReduction}, and let $(H',\gamma_{H'})$ be the $(\Gamma,k',n')$-rich flower.  We claim $(H', \gamma_{H'})$ is the desired $\Gamma$-labeled graph.  Suppose that $(G,\gamma)$ is Eulerian and $4$-edge-connected and contains $(H',\gamma_{H'})$ as an immersion.  By Theorem~\ref{thm:flowerReduction}, $(G,\gamma)$ contains $(F,\gamma_F)$ as a near-full immersion at its center.  Composing this immersion with $\psi$ and applying Observation~\ref{obs:transitivity of almost total immersions} gives an $x$-near-full immersion of $(H,\gamma_H)$ in $(G,\gamma)$, as required.
\end{proof}

%%%%%%%%%%%%%%%%%%%%%%%%
%%%%%%%%%%%%%%%%%%%%%%%%%

\section{Extending a near-full immersion}\label{sec:ExtendingNear}

In this section we discuss under what conditions the near-full immersion in Theorem~\ref{thm:mainReduction} can be replaced by a full immersion. This cannot always be the case: suppose that $(G,\gamma_G)$ and $(H,\gamma_H)$ are Eulerian graphs labeled over $\mathbb{Z}^k_2$ for some $k$ such that the sum of all edge weights in $G$ is different from the sum of all edge weights in $H$ (note that orientation doesn't matter because every element is its own inverse and the group is abelian, so this sum is well defined).  Moreover, the sum is invariant under shifting: shifting at a vertex changes the total by the vertex's degree times the shifting element, which is always zero because the graph is Eulerian. Then it cannot be that $(G,\gamma_G)$ contains $(H,\gamma_H)$ as a full immersion, even though $(G,\gamma_G)$ can contain arbitrary group labeled immersions. It turns out that this is in some sense the only restriction.

We first need some notions from group theory. The \emph{commutator subgroup} of a group $\Gamma$, denoted $[\Gamma, \Gamma]$, is the subgroup generated by all the \emph{commutators}, that is, all elements of the form $a^{-1}b^{-1}ab$ for $a,b \in \Gamma$. 
It is well known that $[\Gamma,\Gamma]$ is a normal subgroup and that $\Gamma/[\Gamma,\Gamma]$ is abelian; in fact, for any normal subgroup $N$ of $\Gamma$, $\Gamma/N$ is abelian if and only if $N$ contains the commutator subgroup. 
We denote by $\Gamma^2$ the set $\{a^2 : a \in \Gamma\}$. We note that $\Gamma^2[\Gamma,\Gamma] = \{ab : a \in \Gamma^2, b \in [\Gamma,\Gamma]\}$ is a  subgroup of $\Gamma$. To see this, modulo $[\Gamma,\Gamma]$ the group is abelian, so the product of two squares is a square and the inverse of a square is a square, implying $\Gamma^2[\Gamma,\Gamma]$ is a subgroup. Since it contains $[\Gamma,\Gamma]$, it is normal as well.  
Finally, we note that if $\Gamma$ is a finite group then $\Gamma/(\Gamma^2[\Gamma,\Gamma])$ is isomorphic to $\mathbb{Z}_2^k$ for some $k \ge 0$; the group is abelian and every element squares to the identity.

\begin{thm}\label{thm:fullImmersion}
Let $\Gamma$ be a finite group. For every Eulerian
%\CM{Is the single vertex graph Eulerian? This is only true if $H$ contains an edge}
$\Gamma$-labeled graph $(H,\gamma_H)$ with at least one edge there exists a $\Gamma$-labeled graph $(H',\gamma_{H'})$ such that for any 4-edge-connected Eulerian $\Gamma$-labeled graph $(G,\gamma_G)$ with no $(H,\gamma_H)$ full immersion, one of the following holds.
\begin{enumerate}
        \item There exists a normal subgroup $\Gamma'$ of $\Gamma$ such that $\Gamma/\Gamma'$ is isomorphic to $\mathbb{Z}_2^k$ for some $k$, and over this quotient labeling the sum of the edge labels in $(G, \gamma_G)$ is different than that of $(H, \gamma_H)$.
        \item $(G,\gamma_G)$ does not contain $(H',\gamma_{H'})$ as an immersion.
\end{enumerate}
\end{thm}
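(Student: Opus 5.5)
The plan is to reduce full immersion to the near-full immersions produced by Theorem~\ref{thm:mainReduction}, and then show that the leftover Eulerian component can always be absorbed into the immersion unless the abelian $\mathbb{Z}_2^k$-obstruction of item 1 is present. Concretely, I would augment $H$ before applying Theorem~\ref{thm:mainReduction}. Fix a vertex $x\in V(H)$ incident to an edge. Let $H^+$ be obtained from $H$ by adding, for each $\alpha\in\Gamma$, a large number $N$ of loops at $x$ labeled $\alpha$, say $N=2|\Gamma|$. Apply Theorem~\ref{thm:mainReduction} to $(H^+,\gamma_{H^+})$ with this $x$ to obtain $(H',\gamma_{H'})$. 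Now suppose that $G$ contains $H'$ as an immersion and that item 1 fails. Then $G$ has an $x$-near-full immersion $\varphi$ of $H^+$. The unused edges form an Eulerian subgraph whose only nontrivial component $C$ contains $\varphi(x)$. Adding an Euler tour of $C$ as one more loop $\hat e$ at $x$, we obtain a full immersion of $H^+ + \hat e$.

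It then remains to show that any graph of the form $H$ plus a multiset $L$ of loops at $x$ admits a full immersion of $H$, provided that $L$ contains many copies of every label and the label sum condition holds. The idea is to concatenate all loops in $L$ into closed trails and splice them into the branch trail of one fixed edge $e_0$ of $H$ incident to $x$. If $e_0$ is a loop at $x$ with label $\beta$, the new label is a product $\beta\lambda_1\cdots\lambda_m$ in some chosen order. If $e_0$ is a non-loop, I would insert the product at the $x$-end. More flexibly, I would split $L$ into two groups and insert one group at each end of $e_0$. The problem thus becomes purely group-theoretic. We have a multiset $L$ which contains every element of $\Gamma$ at least $N$ times, plus the extra element $\lambda=\gamma(\hat e)$. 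We may order $L$ arbitrarily and also invert loops, since a loop can be traversed in reverse. The goal is to show that the product of $L$ can be made equal to $1$ exactly when the image of this product in $\Gamma/\Gamma^2[\Gamma,\Gamma]$ is trivial.

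Item 1 provides this triviality. Since all shifting data is handled by the accommodating labeling, the total label sum of $G$ in the quotient equals that of $H^+ + \hat e$. Therefore the quotient sum of $L\cup\{\lambda\}$ is trivial, after choosing $N$ even so that the added loops contribute evenly. For the group lemma, I would argue as follows. The set of achievable products is closed under right multiplication by commutators: having $a,b,a^{-1},b^{-1}$ available lets us insert $a^{-1}b^{-1}ab$. It is also closed under multiplication by squares: replace a pair $a,a^{-1}$ with $a,a$ by reversing one loop. This uses each loop only once and requires enough copies of each label, which is why $N$ is large, with about $2\log$-many factors needed to write any element of $\Gamma^2[\Gamma,\Gamma]$. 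Hence the achievable products form a coset of $\Gamma^2[\Gamma,\Gamma]$, and the value $1$ is attained precisely when the quotient condition holds.

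I expect the main obstacle to be this last accounting step. Reversing a loop changes its quotient image only by a square, which is fine. The difficulty is expressing an arbitrary element of $\Gamma^2[\Gamma,\Gamma]$ using a bounded budget of loops while every loop is still consumed, with the leftover loops pairing off into trivial products $\alpha\alpha^{-1}$. I would first try to take $N$ a multiple of $4$ and pair leftovers as $\alpha,\alpha^{-1}$. I would also check separately that the extra loop $\lambda$, whose label is uncontrolled, is absorbed by this coset argument.
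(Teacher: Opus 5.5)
Your proposal is correct and follows essentially the same route as the paper: pad $H$ with $2|\Gamma|$ loops of each label at $x$, and apply Theorem~\ref{thm:mainReduction}. Then use the failure of outcome 1 for the quotient by $\Gamma^2[\Gamma,\Gamma]$ to get $\lambda\in\Gamma^2[\Gamma,\Gamma]$, realize $\lambda^{-1}=a_1^2\cdots a_t^2$ by traversing two added-loop trails of label $a_i$ for each $i$, pair off the remaining added loops of each label as $\beta\beta^{-1}$, and splice the resulting identity-labeled circuit at $\varphi(x)$ into a branch trail of an edge of $H$ at $x$. The obstacle you flag is resolved as in the paper: $\Gamma^2[\Gamma,\Gamma]$ is generated by squares alone, so commutators are unnecessary, and a shortest product of squares has distinct partial products, so $t\le|\Gamma|-1$ rather than your unsupported $2\log$ estimate, and each label is consumed an even number of times below $2|\Gamma|$, so $N=2|\Gamma|$ works with no multiple-of-$4$ condition (also, for a non-loop $e_0$ you cannot insert loops at its far end, but this is never needed).
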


We note as before that in the first option, the sum over the labeling of all edges of $(G,\gamma_G)$ is well defined because the quotient group is isomorphic to $\mathbb{Z}_2^k$.  Since $G$ is Eulerian, this sum is also invariant under shifting.  In fact, in option 1 we may take $\Gamma' =\Gamma^2[\Gamma,\Gamma]$. It is not hard to see that for any Eulerian $\Gamma$-labeled graphs $(G,\gamma_G)$ and $(H,\gamma_H)$ such that option 1 holds, $(G,\gamma_G)$ does not contain $(H,\gamma_H)$ as a full immersion.

\begin{proof}[Proof of Theorem~\ref{thm:fullImmersion}]
    Fix an arbitrary vertex $x \in V(H)$ incident with an edge of $H$ and consider the $\Gamma$-labeled graph $(H^\circ, \gamma^\circ_H)$ obtained from $(H, \gamma_H)$ by adding $2|\Gamma|$ loops of label $a$ at $x$, for each $a \in \Gamma$.
    Applying Theorem~\ref{thm:mainReduction} to $(H^\circ, \gamma_{H^\circ})$ yields a $\Gamma$-labeled graph $(H', \gamma_{H'})$ such that, if $(G, \gamma_G)$ contains $(H', \gamma_{H'})$ as an immersion, then $(G, \gamma_G)$ contains $(H^\circ, \gamma_{H^\circ})$ as an $x$-near-full immersion.
    %\EP{Maybe we can state the theorem like that.}
    If $(G, \gamma_G)$ does not contain $(H', \gamma_{H'})$ as an immersion we immediately conclude.
    We may therefore assume that $(G, \gamma_G)$ contains an $x$-near-full immersion $\phi^\circ$ of $(H^\circ, \gamma_{H^\circ})$.
    For simplicity we also assume that $x = \phi^\circ(x).$  Fix an accommodating labeling $\gamma_G'$ for $\phi^\circ$, and use this labeling for all circuit and trail labels below. Since $G$ is Eulerian, shifting does not change its total label in any elementary abelian $2$-group quotient.  Thus if outcome $1$ holds for the labeling $\gamma_G'$ it holds as well for the original labeling.  
    
If there are unused edges, let $C$ be an Eulerian circuit of their unique non-trivial component, starting and ending at $x$, and fix $\alpha=\gamma_G'(C)$. If there are no unused edges, let $\alpha=1$ and omit $C$ from the concatenations below.
    
Assume to reach a contradiction that option $1$ does not hold.
Our goal is to concatenate $C$, if present, with the branch trails of $\phi^\circ$ corresponding to the added loops, in such a way that the product of all the labels is the identity;
in this case we are done, as we may discard the branch trails of $\phi^\circ$ corresponding to the added loops, obtaining an $x$-near-full immersion $\phi$ of $(H, \gamma_H)$ in the process, append the discarded branch trails to $C$, if present, in a way that gives us a circuit $C'$ starting and ending at $x$ and labeled by the identity, and then append $C'$ to any branch trail of $\phi$ corresponding to an edge of $H$ incident with $x$ in order to obtain $(H, \gamma_H)$ as a full immersion.

Indeed, assuming outcome $1$ does not hold, it must in particular not hold with respect to $\Gamma^2[\Gamma,\Gamma]$.
Over this quotient, the sum of the labels of the edges in the branch trails of $\phi$ equals the total label of $(H,\gamma_H)$, and hence that of $(G,\gamma_G')$. Moreover, the total label of the branch trails corresponding to the added loops is the identity, since each label occurs an even number of times. Consequently, the image of $\alpha$ in $\Gamma/(\Gamma^2[\Gamma,\Gamma])$ is the identity, and therefore $\alpha \in \Gamma^2[\Gamma,\Gamma]$.
    
Let $K=\Gamma^2[\Gamma,\Gamma]$. The subgroup generated by all squares is normal, and its quotient has every element of order at most two, so that quotient is abelian. Thus this subgroup contains $[\Gamma,\Gamma]$, and consequently equals $K$.
Since the squares generate $K$, we may write $\alpha^{-1}=a_1^2\cdots a_t^2$ for some $a_1,\ldots,a_t \in \Gamma$, where $t \leq |K|-1 \leq |\Gamma|-1$ because a shortest such expression has distinct partial products. If $\alpha=1$, we take $t=0$.
For each $i \in [t]$, select two previously unused branch trails of $\phi^\circ$ corresponding to added loops of label $a_i$, and orient both to have label $a_i$ under $\gamma_G'$. There are sufficient copies, since we use at most $2t<2|\Gamma|$ copies of any label. Concatenating these trails in the specified order gives label $\alpha^{-1}$.
For each label, an even number of its added loops remain unused. Pair these loops, and for each pair traverse one corresponding branch trail in its prescribed orientation and the other in reverse, so that the pair contributes the identity.
Concatenate $C$, if present, with the selected trails and then these remaining pairs. The resulting circuit begins and ends at $x$, contains every residual edge and every edge of the added-loop branch trails exactly once, and has label $\alpha\alpha^{-1}=1$, as desired.
\end{proof}

We conclude with the proof of Theorem \ref{thm:fullStructure}, which we restate here.

\fullStructure*

\begin{proof}[Proof of Theorem~\ref{thm:fullStructure}]
    Fix $\Gamma$ and $(H,\gamma_H)$, and let $(H',\gamma_{H'})$
    be the graph supplied by Theorem~\ref{thm:fullImmersion}.
    Let $n=|V(H')|$, and fix a positive integer $k$ such that
    every vertex of $H'$ is incident to at most $k$ edges.
    Let 
    $$t=4kn|\Gamma|^{6+\lfloor\log_2|\Gamma|\rfloor}.$$
    In particular, $t\geq n|\Gamma|$.

    Let $(G,\gamma_G)$ be a $4$-edge-connected Eulerian
    $\Gamma$-labeled graph with no $(H,\gamma_H)$ full immersion.
    If the first outcome of Theorem~\ref{thm:fullImmersion} holds,
    then the first outcome of Theorem~\ref{thm:fullStructure} holds as well.
    Thus we may assume that $(G,\gamma_G)$ does not contain $(H',\gamma_{H'})$
    as an immersion.  Theorem~\ref{thm:MainStructureThm} gives a shifting
    $\tilde{\gamma}$ of $\gamma_G$ and a tree-cut decomposition
    $(T,\mathcal{B})$ of $G$ satisfying the second outcome of
    Theorem~\ref{thm:fullStructure}, completing the proof.
\end{proof}

\section*{Declaration of AI use}

ChatGPT-6 Astra was used in proofreading and editing the final manuscript, identifying a number of typos as well as mathematical issues in the proof of Theorem \ref{thm:fullImmersion} and Lemma \ref{lem:uncrossing}.
%BIBLIOGRAPHY
\bibliography{refs}

\end{document}